\newtheorem {theorem}{Theorem}[section]
\newtheorem {question}[theorem]{Question}
\theoremstyle{remark}
\newtheorem {example}[theorem]{Example}
\DeclareFontFamily{U}{mathx}{\hyphenchar\font45}
\DeclareFontShape{U}{mathx}{m}{n}{
      <5> <6> <7> <8> <9> <10>
      <10.95> <12> <14.4> <17.28> <20.74> <24.88>
      mathx10
      }{}
\DeclareSymbolFont{mathx}{U}{mathx}{m}{n}
\DeclareMathAccent{\widecheck}{0}{mathx}{"71}
\def\polhk#1{\setbox0=\hbox{#1}{\ooalign{\hidewidth
    \lower1.5ex\hbox{`}\hidewidth\crcr\unhbox0}}}  
\def\Z {{\mathbb{Z}}}
\def\R {{\mathbb{R}}}
\def\C {{\mathbb{C}}}
\def\Q {{\mathbb{Q}}}
\def\tH{\tilde{H}}
\def\cp{\mathbb{CP}}
\def\F {\mathbb{F}}
\def\del {\partial}
\def\tr {\operatorname{tr}}
\def\Sym{\operatorname{Sym}}
\def\Ta {\mathbb{T}_{\alpha}}
\def\Tb {\mathbb{T}_{\beta}}
\def\S{\mathscr{S}}
\DeclareMathOperator{\id}{\operatorname{id}}
\def\ind{\operatorname{ind}}
\DeclareMathOperator{\rk}{\operatorname{rk}}
\def\Sq{\operatorname{Sq}}
\def\End{\operatorname{End}}
\def\To {\longrightarrow}
\def\spinc{\operatorname{Spin}^c}
\def\sutwo {\operatorname{SU}(2)}
\def\sothree {\operatorname{SO}(3)}
\def\nbhd{\operatorname{nbhd}}
\def\P{\mathcal{P}}
\def\link{\mathit{link}}
\def\Kh{\widetilde{\mathit{Kh}}}
\def\swf{\operatorname{SWF}}
\def\swfh{\mathit{SWFH}}
\def\swfk{\mathit{SWFK}}
\def\tK{\tilde{K}}
\def\U{\operatorname{U}}
\def\pin {\operatorname{Pin}(2)}
\newcommand{\HMto}{\widecheck{\mathit{HM}}}
\newcommand{\HMfrom}{\widehat{\mathit{HM}}}
\newcommand{\HMbar}{\overline{\mathit{HM}}}
\def\HF{\mathit{HF}}
\def\HFp {\HF^+}
\def\HFhat {\widehat{\HF}}
\def\H {\mathbb{H}}
\newcommand{\s}{\mathfrak{s}}
\def\pt {\operatorname{pt}}
\begin{document}

\title[Floer theory and its topological applications]{Floer theory and its topological applications}

\author[Ciprian Manolescu]{Ciprian Manolescu}
\address {Department of Mathematics, UCLA, 520 Portola Plaza\\ Los Angeles, CA 90095}
\email {cm@math.ucla.edu}

\begin{abstract}
We survey the different versions of Floer homology that can be associated to three-manifolds. We also discuss their applications, particularly to questions about surgery, homology cobordism, and four-manifolds with boundary. We then describe Floer stable homotopy types, the related $\pin$-equivariant Seiberg-Witten Floer homology, and its application to the triangulation conjecture.
\end {abstract}

\maketitle

\section{Introduction}

In finite dimensions, one way to compute the homology of a compact, smooth manifold is by Morse theory. Specifically, we start with a a smooth function $f: X \to \R$ and a Riemannian metric $g$ on $X$. Under certain hypotheses (the Morse and Morse-Smale conditions), we can form a complex $C_*(X, f, g)$ as follows: The generators of $C_*(X, f, g)$ are the critical points of $f$, and the differential is given by
\begin{equation}
\label{eq:delmorse}
 \del x = \sum_{\{y \mid \ind(x)-\ind(y)=1\}} n_{xy}\cdot y,
 \end{equation}
where $n_{xy} \in \Z$ is a signed count of the downward gradient flow lines of $f$ connecting $x$ to $y$. The quantity $\ind$ denotes the index of a critical point, that is, the number of negative eigenvalues of the Hessian (the matrix of second derivatives of $f$) at that point. The homology $H_*(X, f, g)$ is called {\em Morse homology}, and it turns out to be isomorphic to the singular homology of $X$ \cite{WittenMorse, FloerMorse, BottMorse}. 

Floer homology is an adaptation of Morse homology to infinite dimensions. It applies to certain classes of infinite dimensional manifolds $X$ and functions $f: X \to \R$, where at critical points of $f$ the Hessian has infinitely many positive and infinitely many negative eigenvalues. Although one cannot define the index $\ind(x)$ as an integer, one can make sense of a relative index $\mu(x, y) \in \Z$ which plays the role of $\ind(x)-\ind(y)$ in the formula \eqref{eq:delmorse}. Then, one can define a complex just as above, and the resulting homology is called {\em Floer homology}. This is typically not isomorphic to the homology of $X$, but rather encodes new information---usually about a finite dimensional manifold from which $X$ was constructed.

Floer homology appeared first in the context of symplectic geometry \cite{FloerFirst, FloerIndex, FloerAction, FloerLagrangian}. In the version called Hamiltonian Floer homology, one considers a compact symplectic manifold $(M, \omega)$ together with a $1$-periodic Hamiltonian function $H_t$ on $M$. From this one constructs the infinite dimensional space $X=\mathcal{L}M$ of contractible loops in $M$, together with a symplectic action functional $\mathcal{A}: X \to \R$. The critical points of $\mathcal{A}$ are periodic orbits of the Hamiltonian flow, and the gradient flow lines correspond to pseudo-holomorphic cylinders in $M$. Hamiltonian Floer homology can be related to the homology of $M$; the main applications of this fact are the proofs of the Arnol'd conjecture \cite{FloerArnold, OnoArnold, HoferSalamonArnold, FukayaOnoArnold, RuanArnold, LiuTian}.

A more general construction in symplectic geometry is Lagrangian Floer homology. Given two Lagrangians $L_0, L_1$ in a symplectic manifold $(M, \omega)$, one considers the space of paths
$$ \P(M; L_0, L_1) = \{ \gamma: [0,1] \to M \mid \gamma(0) \in L_0, \gamma(1) \in L_1 \}$$
together with a certain functional. The critical points correspond to the intersections of $L_0$ with $L_1$, and the gradient flows to pseudo-holomorphic disks with half of the boundary on $L_0$ and half on $L_1$. Under some technical assumptions, the resulting Lagrangian Floer complex is well-defined, and its homology can be used to give bounds on the number of intersection points $x \in L_0 \cap L_1$. Since its introduction in \cite{FloerLagrangian}, Lagrangian Floer homology has developed into one of the most useful tools in the study of symplectic manifolds; see \cite{FOOO1, FOOO2, SeidelBook, OhBook} for a few books devoted to this subject. Hamiltonian Floer homology can be viewed as a particular case of Lagrangian Floer homology: Given $M$ and a Hamiltonian $H_t$ producing a time $1$ diffeomorphism $\psi$, the Lagrangian pair that we need to consider is given by the diagonal and the graph of $\psi$ inside $M \times M$. 

Apart from symplectic geometry, the other area where Floer homology has been very influential is low dimensional topology. There, Floer homology groups are associated to a closed three-manifold $Y$ (possibly of a restricted form, and equipped with certain data). The first construction of this kind was the instanton homology of Floer \cite{FloerInstanton}, where the infinite dimensional space $X$ is the space of $\sutwo$ (or $\sothree$) connections on $Y$ (modulo the gauge action), and $f$ is the Chern-Simons functional. This construction has an impact in four dimensions: the relative Donaldson invariants of four-manifolds with boundary take values in instanton homology.

In this paper we survey Floer theory as it is relevant to low dimensional topology. We will discuss four types of Floer homology that can be associated to a three-manifold (each coming with its own different sub-types):
\begin{enumerate}[(a)] \itemsep5pt
\item Instanton homology;
\item Symplectic instanton homology;\footnote{The terminology ``symplectic instanton homology'' is not yet standard. We use it to mean the different kinds of Lagrangian Floer homology that are meant to recover instanton homology. By the same token, Heegaard Floer homology could be called ``symplectic monopole Floer homology.''}
\item Monopole Floer homology;
\item Heegaard Floer homology.
\end{enumerate}

The types (a) and (c) above are constructed using gauge theory. In (a), the gradient flow lines of the Chern-Simons functional are solutions of the anti-self-dual Yang-Mills equations on $\R \times Y$, whereas in (c) we consider the Chern-Simons-Dirac functional, whose gradient flow lines are solutions to the Seiberg-Witten equations on $\R \times Y$. Different definitions of monopole Floer homology were given in \cite{MarcolliWang, Spectrum, KMBook, FroyshovSW}.

The types (b) and (d) are symplectic replacements for (a) and (c), respectively. Their construction starts with a decomposition of the three-manifold $Y$ into a union of two handlebodies glued along a surface $\Sigma$. To $\Sigma$ we associate a symplectic manifold $M(\Sigma)$: in (b), this is a moduli space of flat connections on $\Sigma$, whereas in (d) it is a symmetric product of $\Sigma$ (which can be interpreted as a moduli space of vortices). To the two handlebodies we then associate Lagrangians $L_0, L_1 \subset M(\Sigma)$, and their Lagrangian Floer homology is the desired theory. This kind of construction was suggested by Atiyah \cite{AtiyahFloer}, and the equivalence of (a) and (b) came to be known as the Atiyah-Floer conjecture. In the monopole context, the analogous construction was pursued by Ozsv\'ath and Szab\'o, who developed Heegaard Floer homology in a series of papers \cite{HolDisk, HolDiskTwo, HolDiskFour, AbsGraded}. The equivalence of (c) and (d) was recently established \cite{KLT1, KLT2, KLT3, KLT4, KLT5, CGH1, CGH2, CGH3}.

We will describe the four different Floer homologies (a)-(d) in each of the Sections \ref{sec:inst} through \ref{sec:HF}, respectively. Throughout, we will give a sample of the applications of these theories to questions in low dimensional topology.

In Section~\ref{sec:swf} we will turn to the question of constructing Floer generalized homology theories, such as Floer stable homotopy. We will discuss how this was done in the monopole setting in \cite{Spectrum}. A by-product of this construction was the definition of $\pin$-equivariant Seiberg-Witten Floer homology, whose main application is outlined in Section~\ref{sec:pin2}: the disproof of the triangulation conjecture for manifolds of dimension $\geq 5$.

\section{Instanton homology}
\label{sec:inst}

The first application of gauge theory to low dimensional topology was Donaldson's diagonalizability theorem:

\begin{theorem}[Donaldson \cite{Donaldson}]
Let $W$ be a simply connected, smooth, closed $4$-manifold. If the intersection form of $W$ is definite, then it can be diagonalized over $\Z$.
\end{theorem}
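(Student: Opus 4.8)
The plan is to use gauge theory on the moduli space of anti-self-dual (ASD) connections. After reversing orientation if necessary, we may assume the intersection form $Q$ of $W$ is negative definite; let $b_2(W) = n$. Fix a generic Riemannian metric $g$ on $W$ and consider an $\sutwo$-bundle $E \to W$ with $c_2(E) = 1$. The first step is to analyze the moduli space $\mathcal{M}$ of $g$-ASD connections on $E$ modulo gauge. The key input from elliptic theory is that, because $b_2^+(W) = 0$, the expected dimension of $\mathcal{M}$ is $8c_2 - 3(1 + b_2^+) = 5$, and for generic $g$ the moduli space is a smooth $5$-manifold away from its singular points. The second step is to identify the singularities and the ends: there are exactly $n$ singular points, each a cone on $\cp^2$, coming from reducible connections (these correspond to splittings $E = L \oplus L^{-1}$, and the count of such splittings is governed by the vectors $e \in H^2(W;\Z)$ with $e \cdot e = -1$, i.e.\ by $Q$ being definite — in the standard case there are exactly $n$ of them when $Q = -I$, but \emph{a priori} one must argue more carefully for a general definite form). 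The third step is the noncompactness analysis: by Uhlenbeck compactness, the only end of $\mathcal{M}$ comes from connections bubbling off at a point, so $\mathcal{M}$ has a single end diffeomorphic to $W \times (0,\eps)$, i.e.\ a collar on a copy of $W$ itself.

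Assembling these pieces, one truncates $\mathcal{M}$ to obtain a compact, oriented, smooth $5$-manifold-with-boundary $\bar{\mathcal{M}}$ whose boundary is the disjoint union of a copy of $W$ and $n$ copies of $\cp^2$ (from excising small cones around the reducibles). Because $\cp^2$ is an oriented cobordism invariant detector — its signature is $1$ and it is not null-cobordant — but $\partial \bar{\mathcal{M}}$ must be null-cobordant, one gets a constraint. Carrying this through with the standard fact that a cone on $\cp^2$ bounds, one concludes $n \leq $ (number of reducibles), and the cobordism $\bar{\mathcal{M}}$ exhibits $W$ as cobordant to $n$ copies of $\pm\cp^2$. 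The final algebraic step translates this: the reducibles correspond to classes $e_1, \dots, e_n \in H^2(W;\Z)$ with $e_i \cdot e_i = -1$, and the cobordism-theoretic bookkeeping forces these to form an orthonormal basis, so $Q \cong -I_n$, i.e.\ $Q$ is diagonalizable over $\Z$.

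The main obstacle is making the gauge-theoretic analysis rigorous at the two delicate points: (i) the transversality/genericity argument ensuring $\mathcal{M}$ is smooth of the expected dimension away from reducibles — this requires the Freed–Uhlenbeck generic metrics theorem and a careful treatment of the failure of transversality precisely at reducible connections; and (ii) the local structure of $\mathcal{M}$ near a reducible connection, where one must show the link is $\cp^2$ via a Kuranishi-model (quadratic) analysis of the ASD equations linearized at the reducible, together with an orientation computation. A secondary subtlety is ruling out extra components or higher bubbling in the compactification, which is where the hypothesis $b_2^+ = 0$ is used again to exclude ASD connections on bundles with $c_2 = 0$ other than the flat (trivial) one — here simple connectivity of $W$ enters, guaranteeing the only flat connection is trivial and hence the ideal boundary is exactly one copy of $W$. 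Once the moduli space is understood as a cobordism, the topological conclusion is essentially formal.
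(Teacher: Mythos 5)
The paper offers no proof of this theorem: it is quoted from Donaldson's paper, and the only indication given is that the argument rests on a study of the anti-self-dual equation \eqref{eq:YM}. Your sketch fleshes out exactly that standard argument --- the moduli space of ASD connections on the $c_2=1$ bundle, truncated to an oriented cobordism from $W$ to the links $\cp^2$ of the reducible points, with the Taubes/Uhlenbeck analysis supplying the collar end on $W$ --- so in approach you agree with what the paper indicates, and the analytic caveats you flag (generic metrics, the Kuranishi model and orientations at reducibles, excluding other bubbling limits using $b_2^+=0$ and simple connectivity) are the right ones. The one place to tighten is the endgame: the number of reducibles is not a priori $n=b_2(W)$, but the number of pairs $\{\pm e\}$, $e\in H^2(W;\Z)$, with $e\cdot e=-1$, and the cobordism/signature count yields only the inequality $n \le (\text{number of such pairs})$. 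The conclusion $Q\cong -I_n$ then needs the elementary lattice lemma that in a negative definite unimodular form distinct pairs of square $-1$ classes are orthogonal (Cauchy--Schwarz), hence there are at most $n$ of them, with equality forcing the form to be diagonal; attributing that final forcing to cobordism-theoretic bookkeeping conflates the gauge-theoretic input with this purely algebraic step, which should be stated and proved separately.
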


The proof uses a study of the anti-self-dual Yang-Mills equations on $W$:
\begin{equation}
\label{eq:YM}
 \star F_A = -F_A,
\end{equation}
where $A$ is a connection in a principal $\sutwo$ bundle $P$ on $W$, and $F_A$ denotes the curvature of $A$. 

Later, Donaldson introduced his polynomial invariants of $4$-manifolds \cite{DonaldsonPolynomials}, which are a signed count of the solutions to \eqref{eq:YM}, modulo the action of the gauge group $\Gamma(\End P)$. These have numerous other applications to four-dimensional topology. 

Instanton homology is a (relatively $\Z/8$-graded) Abelian group $I_*(Y)$ associated to a closed $3$-manifold $Y$ (with some restrictions on $Y$; see below). The main motivation behind the construction of instanton homology is to develop cut-and-paste methods for computing the Donaldson invariants. Roughly, if we have a decomposition of a closed $4$-manifold $W$ as
$$ W = W_1 \cup_Y W_2,$$
then one can define relative Donaldson invariants $D(W_1) \in I_*(Y)$ and $D(W_2) \in I_*(-Y)$ such that the invariant of $W$ is obtained from $D(W_1)$ and $D(W_2)$ under a natural pairing map
$$ I_*(Y) \otimes I_*(-Y) \to \Z.$$

More generally, instanton homology fits into a topological quantum field theory (TQFT). Given a $4$-dimensional cobordism $W$ from $Y_0$ to $Y_1$, we get a map
$$ D(W) : I_*(Y_0) \to I_*(Y_1)$$
which is functorial under composition of cobordisms.

As mentioned in the introduction, to define instanton homology we consider an $\sutwo$ bundle $P$ over $Y$ (in fact, there is a unique such bundle, the trivial one) and form the infinite dimensional space
$$ X = \{ \text{connections on } P \}/ \text{gauge},$$
with the Chern-Simons functional
$$ \operatorname{CS}: X \to \R/ \Z, \ \ \ \operatorname{CS}(A)= \frac{1}{8\pi^2}\int_Y \tr(A \wedge dA + A \wedge A \wedge A).$$
We then define a Morse complex for $\operatorname{CS}$. Its generators are connections $A$ with $F_A=0$, modulo gauge; these can be identified with representations $\pi_1(Y) \to \sutwo$, modulo the action of $\sutwo$ by conjugation. Further, the differential counts gradient flow lines, which can be re-interpreted as solutions to \eqref{eq:YM} on the cylinder $\R \times Y$. The homology of this complex is $I_*(Y)$.

The above is only a rough sketch of the construction. There are many caveats, such as:
\begin{enumerate}[(i)] \itemsep5pt
\item To define the gradient we also need to choose a Riemannian metric $g$ on $Y$. (However, the resulting instanton homology will be independent of $g$.)

\item The Chern-Simons functional has to be suitably perturbed to achieve transversality.

\item One needs to distinguish between irreducible connections (those with trivial stabilizer under the gauge group) and reducible connections. In Floer's original theory \cite{FloerInstanton}, one only counts irreducibles. However, to have $\del^2=0$ in the complex, we need to make sure the interaction with the reducibles is negligible. This happens provided that either:
\begin{enumerate}[(a)] 
\item $Y$ is a homology sphere; or
\item Instead of the $\sutwo$ bundle we use a non-trivial $\sothree$ bundle, satisfying an admissible condition (so that there are no reducibles). Such bundles only exist for $b_1(Y) > 0$. 
\end{enumerate}
There are also versions of Floer homology that involve the reducible; see \cite{DonaldsonBook}.
\end{enumerate}

Point (iii) above says that Floer's instanton homology $I_*(Y)$ is only defined in cases (a) and (b), i.e. for homology spheres and for admissible bundles. If one wants a consistent theory for all $3$-manifolds $Y$, one way to produce it is to take a connected sum with a fixed $3$-manifold (such as $T^3$) that is equipped with an admissible bundle $P_0$. The resulting group
\begin{equation}
\label{eq:isharp}
 I^{\#}(Y) := I(Y \# T^3, P \# P_0)
 \end{equation}
is called {\em framed instanton homology}; cf. \cite{KMinst1}.

We now turn to a few applications of instanton homology.

Given the TQFT structure, it is not surprising that many applications have to do with four-manifolds with boundary and cobordisms. In particular, let us consider the three-dimensional homology cobordism group
\begin{equation}
\label{eq:theta}
\Theta^H_3 = \{ \text{oriented homology 3-spheres}\}/ \sim
\end{equation}
where $Y_0 \sim Y_1 \iff$  there exists a smooth, compact, oriented $4$-manifold $W$ with $\del W = (-Y_0) \cup Y_1$ and $H_1(W; \Z) = H_2(W; \Z)=0.$ Addition in $\Theta^H_3$ is given by connected sum, the inverse is given by reversing the orientation, and $S^3$ is the zero element.  

The first information about $\Theta^H_3$ came from the Rokhlin homomorphism \cite{Rokhlin, EellsKuiper}:
\begin{equation}
\label{eq:mu}
\mu: \Theta^H_3 \to \Z/2, \ \  \mu(Y) = {\sigma(W)}/{8} \pmod {2},
\end{equation}
where $W$ is any compact, spin $4$-manifold with boundary $Y$. One can prove that the value of $\mu$ depends only on $Y$, not on $W$. The homomorphism $\mu$ can be used to show that $\Theta^H_3$ is non-trivial: For instance, the Poincar\'e sphere $P$ bounds the $E_8$ plumbing (of signature $-8$), so $\mu(P)=1$.

The structure of $\Theta^H_3$ is still not completely understood. Most of what we know comes from gauge theory. Using the Yang-Mills equations (albeit without referencing Floer homology directly), Furuta and Fintushel-Stern proved that $\Theta^H_3$ is infinitely generated \cite{FurutaHom, FSinstanton}. Using the $\sutwo$-equivariant structure on instanton homology, Fr{\o}yshov \cite{FroyshovYM} defined a surjective homomorphism
$$ h:  \Theta^H_3 \to \Z.$$
This implies the following:

\begin{theorem}[Fr{\o}yshov \cite{FroyshovYM}] 
\label{thm:summand}
The group $\Theta^H_3$ has a $\Z$ summand. 
\end{theorem}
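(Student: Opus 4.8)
The plan is to obtain the conclusion as a purely formal consequence of the existence of the surjective homomorphism $h : \Theta^H_3 \to \Z$ recalled above. Since $\Z$ is a free abelian group, it is projective as a $\Z$-module, so the short exact sequence
\[ 0 \longrightarrow \ker h \longrightarrow \Theta^H_3 \longrightarrow \Z \longrightarrow 0, \]
in which the surjection is $h$, splits. Concretely, one picks any oriented homology $3$-sphere $Y_0$ with $h(Y_0) = 1$ (such a $Y_0$ exists because $h$ is onto); then $1 \mapsto Y_0$ extends to a group homomorphism $\Z \to \Theta^H_3$ which is a right inverse to $h$. Hence $\Theta^H_3 \cong \Z \oplus \ker h$, so $\Theta^H_3$ has a $\Z$ summand.

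All the real content is of course packed into the construction of $h$, which I would set up following Fr{\o}yshov. For a homology $3$-sphere $Y$ one builds the instanton Floer complex of $Y$ while retaining the unique reducible (trivial) flat connection, whose stabilizer is a copy of $\sutwo$; this equips the associated equivariant Floer theory with the structure of a module over $H^*(B\sutwo) \cong \Z[u]$, $\deg u = 4$. The invariant $h(Y)$ is then extracted from the grading at which the reducible's contribution to this module stabilizes, normalized so that $h(S^3) = 0$. For the argument of the previous paragraph one needs exactly two properties: additivity, $h(Y_1 \# Y_2) = h(Y_1) + h(Y_2)$, together with invariance under homology cobordism — which together say that $h$ descends to a homomorphism on $\Theta^H_3$ — and surjectivity, checked by evaluating $h$ on explicit examples such as Brieskorn spheres.

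The main obstacle lies entirely on the gauge-theoretic side: showing that $h$ is well defined (independent of the Riemannian metric and of the perturbation), invariant under homology cobordism, and additive under connected sum. These rest on the analytic underpinnings of instanton Floer theory — transversality for the perturbed anti-self-dual Yang--Mills equations, compactness and gluing for the moduli spaces over cobordisms, and careful control of how the reducible connection and the $u$-action interact with the cobordism maps — precisely the caveats (i)--(iii) noted earlier. Once $h$ and these properties are in hand, the passage to a $\Z$ summand is the elementary splitting argument above.
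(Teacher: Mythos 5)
Your proposal is correct and follows essentially the same route as the paper: the paper also treats the theorem as an immediate formal consequence of Fr{\o}yshov's surjective homomorphism $h:\Theta^H_3 \to \Z$ (a surjection onto the free group $\Z$ splits, giving the summand), with all the substantive gauge-theoretic content residing in the construction and properties of $h$, which both you and the paper attribute to \cite{FroyshovYM} rather than reprove.
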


Moreover, Fr{\o}yshov found the following generalization of Donaldson's theorem to $4$-manifolds with boundary:
\begin{theorem}[Fr{\o}yshov \cite{FroyshovYM}] 
\label{thm:bdry}
If a homology sphere $Y$ bounds a smooth, compact oriented $4$-manifold with negative definite intersection form, then $h(Y) \geq 0$. This inequality is strict if the intersection form is not diagonal over the integers.
\end{theorem}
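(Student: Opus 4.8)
The plan is to run Donaldson's cobordism argument---the one behind his diagonalizability theorem---over a $4$-manifold with homology-sphere boundary, using the instanton Floer complex of $Y$ together with the $\sutwo$-equivariant structure encoding $h$, so as to account for the extra end of the moduli space at the trivial connection.

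First I would attach a half-infinite cylinder $[0,\infty)\times Y$ to $W$ to form $\hat W$, fix the trivial $\sutwo$-bundle together with a small generic holonomy perturbation, and consider the moduli spaces $\mathcal{M}_k(\hat W)$ of finite-energy anti-self-dual connections of relative instanton number $k$. Since $Y$ is a homology sphere, every such connection is asymptotic along the end to a flat $\sutwo$-connection on $Y$, i.e. to a generator of the instanton complex; hence $\mathcal{M}_k(\hat W)$ splits according to the limiting flat connection, and the piece $\mathcal{M}_k(\hat W;\theta)$ converging to the trivial connection $\theta$ is the one that meets the reducibles on $W$.

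The core of the argument is the analysis near these reducibles. Because the intersection form of $W$ is negative definite, so that $b^+_2(W)=0$, and because $Y$ is a homology sphere, every class $e\in H^2(W;\Z)$ is represented by an anti-self-dual harmonic form decaying on the end, yielding a reducible ASD connection $A_e\oplus A_e^{-1}$ in the $\sutwo$-bundle with $k=-e\cdot e$, and these are the only reducibles. As in Donaldson's local analysis, a neighbourhood of $[A_e\oplus A_e^{-1}]$ in $\mathcal{M}_k(\hat W;\theta)$ is a cone on a complex projective space whose dimension is governed by $-e\cdot e$ through the index of the deformation complex (for instance, for $k=1$ and $e\cdot e=-1$ the link is a copy of $\cp^2$, reproducing the configuration in Donaldson's proof). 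An index computation then shows that, after deleting small cones around the reducibles and a collar of the cylindrical end, $\mathcal{M}_k(\hat W;\theta)$ is a smooth cobordism---in the sense of the instanton TQFT---between a disjoint union of these projective spaces, one for each $\pm e$ of the relevant square, and a cycle in the instanton complex of $Y$ representing the image of the fundamental class under the relative Donaldson map $D(\hat W)$. Feeding this cobordism into the $\sutwo$-equivariant formalism---capping with powers of the degree-$4$ generator of $H^*(B\sutwo)\cong H^*(\hp^\infty)$ and using that $D(W)$ pairs nontrivially against the bottom of the equivariant tower attached to $\theta$---produces exactly the inequality $h(Y)\ge 0$; keeping track of the smallest reducible that is forced to occur gives the sharper bound that $h(Y)$ is at least a quantity of the form $\tfrac18\bigl(\text{(least }-e\cdot e\text{ contribution)}-b_2(W)\bigr)$.

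For the strict statement, assume the form is not diagonalisable over $\Z$. Then, exactly as in Donaldson's argument, the vectors of square $-1$ in $H^2(W;\Z)$ cannot span the lattice, so the $k=1$ cobordism above has too few projective-space ends to be closed off trivially; the resulting deficit forces the relative invariant $D(W)$ to land at a strictly positive equivariant level, i.e. $h(Y)>0$. The main obstacle throughout is the analytic input hidden in the bracketed assertions: Uhlenbeck compactness and the precise structure of the compactified $\mathcal{M}_k(\hat W;\theta)$, transversality away from the reducibles after perturbation, the local cone model (with orientations) at each reducible, and the gluing theorem identifying the remaining ends of the moduli space with the Floer-theoretic data at $\theta$; matching the bookkeeping from these steps with the normalisation used to define $h$ is the final delicate point.
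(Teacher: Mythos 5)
A preliminary remark: this survey states Theorem~\ref{thm:bdry} with a citation to Fr{\o}yshov \cite{FroyshovYM} and contains no proof of it, so there is no in-paper argument to compare against; I can only measure your sketch against Fr{\o}yshov's original one. At the level of strategy you do have the right picture, and it is essentially his: attach a cylindrical end to $W$, study finite-energy ASD moduli spaces on $\hat W$ asymptotic to flat connections on $Y$, identify the reducibles with splittings $L_e\oplus L_e^{-1}$ of charge $-e\cdot e$ coming from the negative definite lattice, model neighbourhoods of them as cones on projective spaces, and play the reducible and bubbling ends off against the Floer-theoretic ends, using the degree-four class ($u$-map) through which the $\sutwo$-equivariant invariant $h$ is defined.

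The difficulty is that the proof, as written, is missing exactly the steps in which the theorem lives. First, ``feeding this cobordism into the $\sutwo$-equivariant formalism \dots produces exactly the inequality $h(Y)\ge 0$'' is an assertion of the conclusion, not an argument: $h$ is characterized by the interplay of the $u$-map with the maps counting index-one trajectories to and from the trivial connection, and the substance of the theorem is the chain-level bookkeeping showing that the relative invariant of $W$, corrected by the reducible contributions, yields a Floer class with the required behaviour under all powers of $u$; moreover a $5$-dimensional charge-one moduli space does not directly hand you ``a cycle in the instanton complex''---one must cut down by $\mu$-classes and identify the cylinder ends with broken trajectories before any such identity makes sense. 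Second, your claimed refinement $h(Y)\ge\tfrac18\bigl((\text{least }-e\cdot e)-b_2(W)\bigr)$ is imported from the Seiberg--Witten side (the characteristic-vector inequality \eqref{eq:Fineq}); in the $\sutwo$ instanton setting there are no characteristic vectors and no such formula drops out of the index computation you describe, which indicates the quantitative bookkeeping has not actually been carried through. Third, the strictness argument is too naive: ``the $(-1)$-vectors do not span, so there are too few projective-space ends to close off'' is not by itself a contradiction. A non-diagonal definite unimodular form can contain many $(-1)$-vectors (a diagonal summand orthogonal to a non-standard piece with none, e.g.\ $n\langle-1\rangle$ plus the negative $E_8$ form), and already in Donaldson's closed case one needs the end count \emph{plus} the algebraic lemma on definite lattices; in the bounded case the deficit has to be absorbed by the Floer ends, and showing that this forces $D(W)$ to a strictly positive equivariant level requires working with moduli spaces of the appropriate charge cut down by $u$-classes, handling the non-standard summand (which may have no $(-1)$-vectors at all), and matching orientations and gradings with the normalisation of $h$---none of which the charge-one picture alone provides.
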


Another celebrated application of instanton homology is the proof of Property P for knots. Given a knot $K \subset S^3$ and relatively prime integers $p, q$, the result of $p/q$ surgery on $K$ is the three-manifold
$$ S^3_{p/q}(K)= (S^3 - \nbhd(K)) \cup_{T^2} (S^1 \times D^2),$$
where the gluing along the torus is done such that the meridian $\{1\} \times D^2$ is taken to a simple closed curve in the homology class $p[\mu]+q[\lambda]$. (Here, $\lambda$ and $\mu$ are the longitude and the meridian of the knot.)

A theorem of Lickorish and Wallace \cite{Lickorish, Wallace} says that every closed $3$-manifold can be obtained by surgery on a collection of knots in $S^3$. Those manifolds obtained by surgery on a single knot form an interesting class. Before Perelman's proof of the Poincar\'e conjecture \cite{Perelman1, Perelman2, Perelman3}, as a first step towards the conjecture, one could ask whether any counterexamples can be obtained by surgery on a knot. Since Gordon and Luecke had shown that $S^3_{p/q}(K)=S^3$ only when $K$ is the unknot \cite{GordonLuecke}, that  question can be rephrased as follows: Does every non-trivial knot $K \subset S^3$ have property P, i.e.,  do we have $$\pi_1(S^3_{p/q}(K)) \neq 1$$ for all $p/q \in \Q$? For $p/q \neq \pm 1$, this was established in \cite{CGLS}. The remaining case $p/q =\pm 1$ was completed by Kronheimer and Mrowka in 2004 (independently of Perelman's work):

\begin{theorem}[Kronheimer-Mrowka \cite{KMPropP}]
\label{thm:propP}
If a homotopy $3$-sphere $Y$ is obtained by $\pm 1$ surgery on a knot $K \subset S^3$, then $K$ is the unknot (and hence $Y$ is the three-sphere). 
\end{theorem}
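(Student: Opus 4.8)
The plan is to argue by contradiction: assuming $K$ is non-trivial, I will show that $Y = S^3_{\pm 1}(K)$ cannot be a homotopy sphere. The strategy is to extract a non-vanishing statement for an instanton Floer group of the $0$-surgery $Y_0 = S^3_0(K)$ from the geometry of the knot exterior, and to play it off against a vanishing statement for $I_*(Y)$ that follows from $\pi_1(Y) = 1$; the two are linked by the surgery exact triangle for the triad of slopes $(\infty, 0, \pm 1)$.

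For the non-vanishing input I would argue as follows. Since $K$ is non-trivial its Seifert genus is at least $1$, so the capped-off minimal-genus Seifert surface $R \subset Y_0$ has genus $\geq 1$, and by Gabai's work on sutured hierarchies $Y_0$ carries a taut foliation having $R$ as a compact leaf. By Eliashberg--Thurston this foliation can be perturbed to a positive, weakly symplectically fillable contact structure on $Y_0$; capping the weak filling symplectically --- and, if necessary, taking a symplectic fiber sum with a suitable symplectic manifold to arrange $b_2^+ \geq 2$ --- produces a closed symplectic $4$-manifold $X$ into which $Y_0$ embeds, separating it as $X = X_- \cup_{Y_0} X_+$, with the symplectic form positive on $R$. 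The key ingredient now enters: a closed symplectic $4$-manifold with $b_2^+ \geq 2$ has non-trivial Donaldson invariants, by Taubes's theorem that such manifolds have non-vanishing Seiberg--Witten invariants together with the proof of Witten's conjecture (Feehan--Leness) relating the Seiberg--Witten and Donaldson invariants. Using the TQFT structure of instanton homology to factor $0 \neq D(X) = \langle D(X_-), D(X_+)\rangle$, one concludes that the relative invariant $D(X_-)$ is a non-zero element of the instanton Floer homology $I^w_*(Y_0)$ of the $0$-surgery, computed with the admissible $\sothree$-bundle $w$ determined by $R$. In particular $I^w_*(Y_0) \neq 0$.

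For the vanishing input I would invoke the surgery exact triangle
$$ \cdots \longrightarrow I_*(S^3) \longrightarrow I^w_*(S^3_0(K)) \longrightarrow I_*(S^3_{\pm 1}(K)) \longrightarrow \cdots $$
Here $I_*(S^3) = 0$. If $Y = S^3_{\pm 1}(K)$ were a homotopy sphere, then $\pi_1(Y) = 1$, so the only flat $\sutwo$-connection on $Y$ is the trivial, reducible one; since $H^1(Y;\R) = 0$ this connection is isolated and non-degenerate, so a suitable small perturbation of the Chern--Simons functional produces no irreducible critical points, whence $I_*(Y) = 0$. Exactness then forces $I^w_*(Y_0) = 0$, contradicting the previous paragraph. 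Therefore $K$ is the unknot, and hence $Y = S^3$.

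The main obstacle is the input that closed symplectic $4$-manifolds have non-trivial Donaldson invariants: this is not elementary --- it passes through Taubes's ``$\SW = \mathrm{Gr}$'' theory and the Feehan--Leness proof of Witten's conjecture --- and it is the piece that was unavailable before 2004. Two further subtleties deserve attention. First, the vanishing $I_*(Y) = 0$ genuinely uses $\pi_1(Y) = 1$ rather than merely $H_*(Y) = H_*(S^3)$; the Poincar\'e homology sphere has non-trivial instanton homology, so the argument is intrinsically tied to simple-connectivity of the surgered manifold, exactly the hypothesis of the theorem. Second, the chain Gabai $\to$ Eliashberg--Thurston $\to$ symplectic capping must be carried out compatibly with the surface $R$, so that $D(X_-)$ lands in the intended non-trivial summand of $I^w_*(Y_0)$; making this --- together with the smoothness of Gabai's foliations and the precise hypotheses under which Witten's conjecture is known --- fully rigorous is the technical heart of the argument.
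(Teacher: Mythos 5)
Your outline is essentially the argument the paper is sketching (and the actual Kronheimer--Mrowka proof): Gabai's taut foliation on the $0$-surgery, Eliashberg--Thurston and symplectic capping, Taubes plus Feehan--Leness to get nonvanishing Donaldson invariants and hence a nonzero relative invariant in $I^w_*(S^3_0(K))$, and Floer's surgery exact triangle to transfer this to the $\pm 1$ surgeries. Your contradiction framing ($\pi_1(Y)=1 \Rightarrow I_*(Y)=0$) is just the contrapositive of the paper's final step, that $I_*(S^3_{\pm 1}(K)) \neq 0$ forces a nontrivial representation $\pi_1 \to \operatorname{SU}(2)$, so the two arguments coincide up to the technical points you already flag.
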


This result builds on the work of many mathematicians; it uses results from symplectic and contact geometry, as well as gauge theory. Instanton homology enters the picture through the connection between the generators of the Floer complex (flat connections) and representations $\pi_1(Y) \to \sutwo$. The final step in the proof of Theorem~\ref{thm:propP} is to show that $I_*(S^3_{\pm 1}(K)) \neq 0$, which implies the existence of a non-trivial representation $\pi_1(S^3_{\pm 1}(K))  \to \sutwo$, and hence the non-vanishing of the fundamental group.

Here is another application of instanton homology to knot theory. Recall that to a knot $K \subset S^3$ we can associate the Jones polynomial 
$$V_K(t) \in \Z[t, t^{-1}].$$ A natural question is whether the Jones polynomial detects the unknot $U$, that is, if $V_K(t)=V_U(t)=1$, do we have $K=U$? This is still open, but a ``categorified'' version of this question has been answered. Specifically, in \cite{Khovanov, KhovanovPatterns}, Khovanov defined (combinatorially) a bi-graded homology theory for knots
$$ \Kh(K) = \bigoplus_{i, j\in \Z} \Kh^{i,j}(K)$$
such that its Euler characteristic gives the Jones polynomial:
$$ \sum_{i,j \in \Z} (-1)^i t^j \rk \Kh^{i,j}(K) = V_K(t).$$
It turns out that Khovanov homology detects the unknot:

\begin{theorem}[Kronheimer-Mrowka \cite{KMUnknot}]
If a knot $K \subset S^3$ has $\Kh(K)=\Kh(U)$, then $K=U$.
\end{theorem}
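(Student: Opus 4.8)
The plan is to factor this implication through a more refined gauge-theoretic invariant of the knot, exploiting on one side its relation to $\sutwo$ representation varieties (in the spirit of the proof of Theorem~\ref{thm:propP}) and on the other side its relation to the cube of resolutions underlying Khovanov homology. The first step is to attach to a knot $K \subset S^3$ a \emph{singular instanton homology} $I^{\natural}(K)$: one works with $\sothree$ connections on $S^3$ carrying a prescribed orbifold-type singularity of cone angle $\pi$ along $K$ (stabilized by a small extra meridional arc, to eliminate reducibles and to fix basepoint issues), together with the Chern--Simons functional, and runs the Morse-theoretic machinery of Section~\ref{sec:inst} with the caveats (i)--(iii) handled as there. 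The generators of the resulting complex are flat singular connections, corresponding to conjugacy classes of representations $\pi_1(S^3 \setminus K) \to \sutwo$ sending each meridian to a traceless element, and the differential counts singular anti-self-dual Yang--Mills instantons on $\R \times S^3$. A short computation gives $I^{\natural}(U) \cong \Z$.

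The second step is to establish an \emph{unoriented skein exact triangle} for $I^{\natural}$: the two resolutions of a crossing in a planar diagram fit into a long exact sequence together with $I^{\natural}$ of the original diagram, proved by the standard surgery-cobordism and neck-stretching arguments in Floer theory. Iterating over all crossings of a diagram produces a spectral sequence whose early page is assembled from the $I^{\natural}$ of the completely resolved diagrams --- precisely the combinatorial data out of which reduced Khovanov homology is built --- and which converges to $I^{\natural}(K)$. One must check that the induced maps agree, up to the requisite algebra, with the Frobenius-algebra maps in Khovanov's cube, so that the $E_2$ page is identified with $\Kh(K)$ (possibly after reducing coefficients, and up to taking the mirror, which does not affect ranks). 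This yields the inequality $\rk I^{\natural}(K) \le \rk \Kh(K)$.

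The third step, which I expect to be the main obstacle, is the detection statement: $\rk I^{\natural}(K) \ge 1$ always, with equality \emph{only} if $K = U$. The lower bound and the unknot computation are easy; the hard direction is that a nontrivial $K$ forces $\rk I^{\natural}(K) > 1$. For this one relates $I^{\natural}$ to a sutured version of instanton homology of the knot complement, proves a Floer-type excision theorem for that invariant, and then runs a Gabai-style sutured-manifold hierarchy argument to show it detects the Seifert genus. A nontrivial knot has genus $\ge 1$, so the invariant is nonzero in two distinct conjugate Alexander-type gradings and therefore has rank $\ge 2$; transferring this back gives $\rk I^{\natural}(K) \ge 2$. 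This step draws on the full weight of the techniques and earlier results surrounding Theorem~\ref{thm:propP}, and it is where essentially all of the geometric difficulty resides.

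Finally, assemble the pieces: if $\Kh(K) = \Kh(U)$ then, since reduced Khovanov homology of the unknot has total rank $1$, we get $\rk \Kh(K) = 1$; the spectral sequence of the second step then forces $\rk I^{\natural}(K) \le 1$, hence $\rk I^{\natural}(K) = 1$ by the lower bound, and the detection statement of the third step concludes that $K = U$.
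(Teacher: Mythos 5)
Your proposal follows the same route as the paper's sketch: define the singular instanton knot homology $I^{\natural}(K)$, use the unoriented skein triangle to build a spectral sequence from (reduced) Khovanov homology converging to $I^{\natural}(K)$, obtain the rank inequality $\rk \Kh(K) \geq \rk I^{\natural}(K)$, and then use sutured instanton homology with a Gabai-style hierarchy/genus-detection argument to show $\rk I^{\natural}(K) = 1$ only for the unknot. This matches the argument of Kronheimer--Mrowka as summarized in the paper, so the approach is correct and essentially identical.
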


The proof uses a version of instanton homology for knots, $I^{\natural}(K)$. There is a spectral sequence relating $\Kh(K)$ to $I^{\natural}(K)$, which implies a rank inequality between the two theories, $\rk \Kh(K) \geq \rk I^{\natural}(K)$. Using sutured decompositions of the knot complement, it can be shown that $\rk I^{\natural}(K) \geq 1$, with equality if and only if $K$ is the unknot; see \cite{KMsutures, KMUnknot}. In turn, this implies the corresponding result for $\Kh$.

\section{Symplectic instanton homology}
\label{sec:sinst}

A Heegaard splitting of a closed $3$-manifold $Y$ is a decomposition
\begin{equation}
\label{eq:Heegaard}
Y = U_0 \cup_{\Sigma} U_1,
\end{equation}
where $\Sigma$ is a surface of genus $g$ and $U_0$, $U_1$ are handlebodies. Given such a splitting (which can be found for any $Y$), we consider the moduli space $M(\Sigma)$ of $\sutwo$ flat connections over $\Sigma$, modulo gauge. We can identify it with the representation space
$$ \{\pi_1(\Sigma) \to \sutwo\} / \sutwo.$$
The handlebodies $U_i$ ($i=0,1$) produce subspaces $L_i \subset M(\Sigma)$, corresponding to representations that extend to $\pi_1(U_i)$. 

The Atiyah-Floer conjecture \cite{AtiyahFloer} states that there should be an isomorphism:
\begin{equation}
\label{eq:AF}
 I_*(Y) \cong HF_*(L_0, L_1),
 \end{equation}
where the left hand side is instanton homology, and the right hand side is Lagrangian Floer homology inside $M(\Sigma)$. The idea behind the conjecture is to deform the metric on $Y$ by inserting a long cylinder of the form $[-T, T] \times \Sigma$ in the middle of the decomposition \eqref{eq:Heegaard}. As $T \to \infty$, we expect the flat connections on $Y$ to ``localize'' to intersection points $L_0 \cap L_1 \subset M(\Sigma)$, and the ASD Yang-Mills equations on $\R \times Y$ to turn into the nonlinear Cauchy-Riemann equations on $M(\Sigma)$ that define pseudo-holomorphic curves.

The first difficulty with \eqref{eq:AF} is that the right hand side (which we call {\em symplectic instanton homology}) is not well-defined. This is because the moduli space $M(\Sigma)$ and the Lagrangians $L_0, L_1$ are singular (at the points corresponding to reducible connections). Still, by tweaking the definition in various ways, one can define the right hand side in certain settings:
\begin{itemize} \itemsep5pt
\item Dostoglou-Salamon \cite{DostoglouSalamon} considered $\U(2)$ connections in a non-trivial bundle over $\Sigma$. The resulting moduli space $M'(\Sigma)$ is smooth, and using it they formulate (and then prove) a version of \eqref{eq:AF} for mapping tori;
\item
Salamon-Wehrheim \cite{SalamonWehrheim} defined a Lagrangian Floer homology in infinite dimensions, as the first step in a program for establishing \eqref{eq:AF} for homology spheres $Y$;
\item
Wehrheim-Woodward \cite{WehrheimWoodwardFFT} developed Lagrangian Floer homology in $M'(\Sigma)$ further. They define the right hand side of \eqref{eq:AF} whenever $Y$ is equipped with an admissible bundle (with no reducibles). In particular, by taking connected sum with a torus, they can define a ``framed'' version of symplectic instanton homology, which is conjecturally the same as $I^{\#}(Y)$ from \eqref{eq:isharp};
\item
Another definition of framed symplectic instanton homology was proposed by Manolescu-Woodward \cite{MWextended}. This is based on doing Lagrangian Floer homology inside a (smooth) extended moduli space of $\sutwo$ connections, whose symplectic quotient is $M(\Sigma)$.  
\end{itemize}

For recent progress towards the Atiyah-Floer conjecture for admissible bundles (i.e., with no reducibles), see \cite{Duncan, LipyanskiyAF}.

Symplectic instanton homology has not yet produced any significant topological applications. Nevertheless, it motivated the analogous construction in the monopole setting, which led to the development of Heegaard Floer homology (cf. Section~\ref{sec:HF} below). 

\section{Monopole Floer homology}
\label{sec:mon}

Apart from the ASD Yang-Mills equations, the other main input that gauge theory provides for the study of $4$-manifolds is the Seiberg-Witten (or monopole) equations \cite{SW1, Witten}:
 \begin{equation}
 \label{eq:sw}
 F_A^+ = \sigma(\phi), \ \ D_A \phi =0.
 \end{equation}
 These are associated to a $4$-manifold $W$ equipped with a $\spinc$ structure $\s$, with spinor bundles $S^+, S^-$. In these equations, $A$ is a $\spinc$ connection, $\phi$ is a section of $S^+$, $D_A$ is the Dirac operator associated to $A$, and $\sigma$ is a certain quadratic expression in $\phi$. The signed count of solutions to \eqref{eq:sw} gives the Seiberg-Witten invariant of the pair $(W, \s)$.

Monopole Floer homology is obtained from the Seiberg-Witten equations similarly to how instanton homology is obtained from the Yang-Mills equations. Given a three-manifold $Y$ with a $\spinc$ structure $\s$, we consider an infinite dimensional configuration space of connection-spinor pairs $(A, \phi)$, modulo gauge. (Here, $A$ is a connection in a $\U(1)$, rather than in an $\sutwo$ or $\sothree$ bundle.) The configuration space is equipped with the Chern-Simons-Dirac functional CSD, given by
$$ \operatorname{CSD}(A, \phi) = -\frac{1}{8} \int_Y (A^t - A_0^t) \wedge (F_{A^t} + F_{A_0^t}) + \frac{1}{2} \int_Y \langle D_A \phi, \phi\rangle \ d \! \operatorname{vol}.$$
Here, $A_0$ is a fixed base connection, and the superscript $t$ denotes the induced connections in the determinant bundle. 

The Floer homology associated to CSD is monopole Floer homology. There are several difficulties that need to be overcome to make this definition precise. As in the instanton case, the main problem is the presence of reducible connections. In their monograph on the subject \cite{KMBook}, Kronheimer and Mrowka deal with this by considering a (real) blow-up of the configuration space. They succeed in defining monopole Floer versions (in three versions: $\HMto$, $\HMfrom$, $\HMbar$) for all pairs $(Y, \s)$. 

When $Y$ is a rational homology sphere, alternate constructions of monopole Floer homology have been proposed by Marcolli-Wang \cite{MarcolliWang}, Manolescu \cite{Spectrum}, and Fr{\o}yshov \cite{FroyshovSW}.

Monopole Floer homology can be applied to questions about homology cobordism and four-manifolds with boundary, in a manner similar to instanton homology. In particular, one can define a surjective homomorphism
$$ \delta: \Theta^3_H \to \Z$$ 
and give a proof of Theorem~\ref{thm:summand} using monopoles \cite{FroyshovSW, KMBook}. The definition of $\delta$ uses the $\Z[U]$-module structure on $\HMto$, which comes from the $S^1$-equivariance of the equations, since $H^*_{S^1}(\pt)=H^*(\cp^{\infty})=\Z[U].$ Precisely, we have:
\begin{equation}
\label{eq:delta}
 \delta(Y) = \tfrac{1}{2} \min \{r \mid \exists \ x \neq 0 \text{ s.t. } \forall l, \ x \in \operatorname{im}(U^l: \HMto_{r+2l}(Y) \to \HMto_r(Y)) \}.
 \end{equation}

If $Y$ is an integral homology sphere and $W$ is a negative-definite $4$-manifold with boundary $Y$, we have 
\begin{equation}
\label{eq:Fineq}
 c^2 + \operatorname{rk}(H^2(W; \Z)) \leq 8\delta(Y),
 \end{equation}
for any characteristic vector $c \in H_2(W; \Z)/$torsion, i.e., a vector such that $c \cdot v \equiv v \cdot v \pmod{2}$ for all $v \in H_2(W; \Z)/$torsion. This implies the analog of Theorem~\ref{thm:bdry}, that $\delta(Y) \geq 0$.

One advantage of monopole Floer homology (over instanton homology) is its closer relation to geometric structures on $3$-manifolds, such as embedded surfaces, taut foliations, and contact structures \cite{KMcontact}. (Inspired by the monopole case, similar connections were later proved to exist for instanton homology as well, but in a more roundabout way: using sutured decompositions; see \cite{KMsutures, BaldwinSivek}.) By exploiting the relation of monopole Floer homology to taut foliations, one can prove:

\begin{theorem}[Kronheimer-Mrowka-Ozsv\'ath-Szab\'o \cite{KMOS}]
\label{thm:KMOS}
Suppose $K \subset S^3$ is a knot such that there is an orientation-preserving diffeomorphism $S^3_r(K) \cong S^3_r(U)$, for some $r \in \Q$. Then $K$ is the unknot $U$.
\end{theorem}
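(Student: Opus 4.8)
The plan is to prove Theorem~\ref{thm:KMOS} by combining the surgery exact triangle in monopole Floer homology with the fact---due to Gabai and to Eliashberg-Thurston---that a knot complement carries a taut foliation unless the knot is trivial, together with Kronheimer-Mrowka's theorem that a closed three-manifold admitting a taut foliation has nonzero reduced monopole Floer homology $\HMto_{\red}$. More precisely, suppose $K$ is nontrivial and $S^3_r(K)\cong S^3_r(U)$ for some $r=p/q\in\Q$. The first step is to reduce to the case where $r$ is an integer, or more generally to identify exactly which $\HMto(S^3_r(U))$ we are competing against: for $U$ the unknot, $S^3_{p/q}(U)=L(p,q)$ is a lens space (or $S^1\times S^2$ when $p=0$), and these are precisely the manifolds whose monopole Floer homology is ``as small as possible''---they are $L$-spaces, so $\HMto_{\red}=0$ in every $\spinc$ structure. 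Thus the hypothesis forces $S^3_r(K)$ to be an $L$-space as well.

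The second step is to show that if $S^3_r(K)$ is an $L$-space and $r>0$ (after possibly replacing $K$ by its mirror and $r$ by $-r$, using $S^3_{-r}(\bar K)=-S^3_r(K)$ and the behavior of $\HMto$ under orientation reversal), then a large positive surgery $S^3_N(K)$ for $N\gg 0$ is also an $L$-space. This is where the surgery exact triangle enters: there is an exact triangle relating $\HMto(S^3_0(K))$, $\HMto(S^3_{N}(K))$, and $\HMto(S^3)$ (or the appropriate integer-surgery triangle), and an inductive ``mapping cone'' argument---analogous to the Ozsv\'ath-Szab\'o description of integer surgeries---shows that once one slope gives an $L$-space, all sufficiently large integral slopes do too. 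Then $S^3_N(K)$ is an $L$-space for $N$ large, and in particular $\HMto_{\red}(S^3_N(K))=0$.

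The third step derives a contradiction from the taut foliation input. Since $K$ is nontrivial, by Gabai's theorem the complement $S^3\setminus\nbhd(K)$ contains a taut foliation, and by the Eliashberg-Thurston perturbation one can arrange it to be transverse to the boundary torus with a prescribed slope; Gabai's construction in fact produces, for the zero-framed (Seifert) surface, taut foliations realizing all sufficiently negative---equivalently, after orientation, a cofinal set of---boundary slopes, so one obtains a closed taut foliation on $S^3_N(K)$ for $N\gg 0$. Kronheimer-Mrowka's theorem on the nonvanishing of monopole Floer homology in the presence of a taut foliation (a consequence of the nontriviality of the Seiberg-Witten contact invariant of the associated contact structure, using Eliashberg-Thurston and the Kronheimer-Mrowka contact-invariant machinery) then gives $\HMto_{\red}(S^3_N(K))\neq 0$, contradicting the previous step. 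Hence $K=U$, and then $S^3_r(K)=S^3_r(U)$ automatically.

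The main obstacle is the second step: leveraging the single $L$-space hypothesis at one slope $r$ to control \emph{large} surgeries, which requires setting up the integer-surgery mapping-cone formalism in monopole Floer homology and an argument (borrowed from the Heegaard Floer picture, where one knows $d$-invariant inequalities and the structure of $\HFp$ of large surgeries in terms of the knot filtration) that an $L$-space surgery at one nonzero slope propagates. One must also be careful with the $r=0$ and small-$|p|$ cases, and with matching orientations so that the taut foliations produced by Gabai live on the same manifold (with the same orientation) as the one constrained to be an $L$-space; in practice Theorem~\ref{thm:KMOS} is often first proved for $r$ with $|r|$ not too small or for integral $r$, and the general rational case is deduced using the relationship between $S^3_{p/q}(K)$ and the $JSJ$/Dehn-filling structure of the complement, or by a direct refinement of the mapping-cone argument to rational slopes.
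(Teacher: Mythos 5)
There is a genuine gap, and it is located exactly where you pass from the hypothesis to ``$S^3_r(K)$ is an $L$-space'' and then try to contradict this via taut foliations on \emph{large} surgeries $S^3_N(K)$. First, the reduction to the $L$-space condition throws away the information that actually drives the theorem: nontrivial knots do admit $L$-space (indeed lens space) surgeries---for example large positive surgeries on torus knots and on Berge knots---so any argument that uses only ``some slope gives an $L$-space'' and concludes $K=U$ is proving a false statement and cannot be correct. The hypothesis of Theorem~\ref{thm:KMOS} is stronger: the surgered manifold is orientation-preservingly diffeomorphic to $S^3_r(U)$, so its Floer homology agrees with that of the unknot surgery as an \emph{absolutely graded} group with its $\spinc$ decomposition (equivalently, the correction-term/Fr{\o}yshov-type data match), and it is this graded information, fed through the surgery exact triangles, that constrains the knot. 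Second, your taut-foliation input is misapplied: Gabai's theorem for a nontrivial knot produces a taut foliation of the complement containing the Seifert surface as a leaf, meeting $\del\,\nbhd(K)$ in the \emph{longitudinal} slope, hence it caps off to a taut foliation on the zero-surgery $S^3_0(K)$ only. It does not yield taut foliations on $S^3_N(K)$ for all large $N$; indeed $S^3_N(T_{2,3})$ is an $L$-space for $N\gg 0$, so by the very nonvanishing theorem you invoke it carries no taut foliation. So your third step fails for exactly the knots that are the real danger in this theorem.

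The argument sketched in the paper (and carried out in \cite{KMOS}) runs in the opposite direction: one uses the surgery exact triangles to transfer the hypothesis $\HMto(S^3_r(K))\cong\HMto(S^3_r(U))$, with its gradings and $\spinc$ structures, into a statement about the Floer homology of the \emph{zero}-surgery, forcing $\HMto(S^3_0(K))$ to look like $\HMto(S^1\times S^2)=\HMto(S^3_0(U))$, i.e.\ $\HMto_{\red}(S^3_0(K))=0$ in the relevant $\spinc$ structure. This is then contradicted, for $K\neq U$, by Gabai's taut foliation on $S^3_0(K)$ together with the Eliashberg--Thurston perturbation and the Kronheimer--Mrowka nonvanishing theorem---precisely the inputs you cite, but applied to the one surgery slope where they are actually available. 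If you want to salvage your outline, the work to be done is in your second step, upgraded: you must show that the graded isomorphism at the single slope $r$, not merely the $L$-space property, propagates through the (rational) surgery triangle to pin down the Floer homology of the $0$-surgery; the small-$|p|$ and $r=0$ cases and the orientation bookkeeping you mention are then handled inside that mapping-cone argument rather than by seeking foliations at large slopes.
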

 
An important ingredient in the proof of Theorem~\ref{thm:KMOS} are exact triangles that relate the Floer homologies of different surgeries on $K$. This allows one to reduce the argument to studying the Floer homology of $0$-surgeries. One then uses a non-vanishing result for the Floer homology of manifolds admitting taut foliations (such as $S^3_0(K)$ for $K \neq U$). 
 
Another celebrated application of monopole Floer homology is Taubes' solution to the Weinstein conjecture in dimension three:

\begin{theorem}[Taubes \cite{TaubesWeinstein}]
Let $Y$ be a closed $3$-manifold equipped with a contact form, and let $R$ be the associated Reeb vector field. Then $R$ has at least one periodic orbit.
\end{theorem}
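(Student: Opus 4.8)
\emph{Proof sketch (proposal).} The plan is to argue by contradiction, using the non-vanishing of monopole Floer homology as the ``input'' that guarantees Seiberg--Witten solutions exist, and then Taubes' analysis of the large-perturbation limit of those solutions to manufacture a closed Reeb orbit. Suppose the Reeb field $R$ of the contact form $\lambda$ has no periodic orbit. First I would fix a Riemannian metric $g$ on $Y$ adapted to $\lambda$, so that $|\lambda|_g=1$, $\star\, d\lambda = 2\lambda$, and $R$ is the metric dual of $\lambda$; then $\xi=\ker\lambda$ inherits a compatible almost complex structure and determines a $\spinc$ structure $\s_\xi$ with spinor bundle $S=\underline{\C}\oplus\xi$. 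The next step is to perturb the three-dimensional Seiberg--Witten equations on $(Y,\s_\xi)$ by a large multiple $r>0$ of the contact form: one looks for pairs $(A,\phi)$, with $\phi=(\alpha,\beta)$, solving
$$\star F_A \;=\; r\bigl(\langle \mathrm{cl}(\lambda)\phi,\phi\rangle - i\lambda\bigr) + (\text{lower order terms}), \qquad \Dirac_A\phi = 0,$$
and one studies what happens along a sequence $r_n\to\infty$.

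The point of entry from Floer theory is the following. For the $\spinc$ structure $\s_\xi$, the monopole Floer homology $\HMfrom(Y,\s_\xi)$ is non-zero; this holds for every closed oriented $3$-manifold (for instance because in sufficiently negative degrees $\HMfrom$ agrees with $\HMbar$, which is always non-trivial), by the work of Kronheimer--Mrowka. Consequently, for every admissible perturbation -- in particular for the contact perturbation above, with $r$ arbitrarily large -- the corresponding Seiberg--Witten moduli space on $Y$ is non-empty: if it were empty for some large $r$, the Floer complex computing $\HMfrom(Y,\s_\xi)$ would be trivial. Some care is needed because the contact-adapted data are not generic, so one takes generic perturbations converging to the contact one and passes to a limit using Gromov-type compactness for Seiberg--Witten solutions. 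This produces, for a sequence $r_n\to\infty$, solutions $(A_n,\phi_n)=(A_n,(\alpha_n,\beta_n))$.

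Now comes Taubes' analysis of this sequence, which is the technical heart of the argument. Using the Weitzenb\"ock formula one proves the a priori pointwise bounds $|\alpha_n|\le 1+O(1/r_n)$ and $|\beta_n|=O(1/r_n)$, and shows that both $1-|\alpha_n|^2$ and the curvature $F_{A_n}$ concentrate, as $r_n\to\infty$, in a shrinking neighbourhood of the zero set $\alpha_n^{-1}(0)$. Pairing the curvature equation with $\lambda$ controls the energy $E_n = i\int_Y \lambda\wedge F_{A_n}$. One then shows, after passing to a subsequence, that $\alpha_n^{-1}(0)$ converges as a current (exploiting the monotonicity built into the equations, together with local structure theory of $J$-holomorphic curves in the spirit of Taubes' ``$\SW=\mathrm{Gr}$'' program) to a non-empty finite union of closed integral curves of $R$, carrying positive total symplectic action bounded in terms of $\s_\xi$. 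Since we have assumed $R$ has no periodic orbit, this limiting set must be empty; this forces $E_n\to 0$ and $1-|\alpha_n|^2\to 0$ in $L^1$, so that for $n$ large $(A_n,\phi_n)$ is essentially reducible with $F_{A_n}$ too small to represent $c_1(\s_\xi)$ -- contradicting that $(A_n,\phi_n)$ solves the $r_n$-perturbed equations in the class $\s_\xi$. This contradiction shows a periodic Reeb orbit must exist.

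I expect the main obstacle to be exactly this last step: establishing the pointwise and energy estimates on $(A_n,\phi_n)$ \emph{uniformly} in $r_n$, and running the compactness argument that extracts an honest closed Reeb orbit from the concentration locus of the spinor. This requires a delicate interplay of Seiberg--Witten elliptic estimates, the local structure theory of pseudoholomorphic curves, and careful bookkeeping of action versus energy, and it is where essentially all of the new analytic content lies. By contrast, the Floer-theoretic input -- non-vanishing of $\HMfrom(Y,\s_\xi)$ -- is comparatively soft, but it is indispensable: it is what guarantees there is a sequence of solutions to take a limit of in the first place.
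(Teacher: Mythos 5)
Your outline does follow the same route that the paper sketches (the paper itself only cites Taubes and gives a two-sentence idea, not a proof): non-vanishing of monopole Floer homology supplies solutions of the Seiberg--Witten equations perturbed by a large multiple $r$ of the contact form, and Taubes' concentration analysis turns the zero locus of the spinor component $\alpha$ into a closed Reeb orbit as $r\to\infty$. But the steps where you supply your own reasoning have genuine gaps. The inference ``if the moduli space were empty for some large $r$, the Floer complex computing $\HMfrom(Y,\s_\xi)$ would be trivial'' is not correct: when $c_1(\s_\xi)$ is torsion, the chain complex for $\HMfrom$ also contains generators coming from reducible solutions, and these persist for every $r$ because the contact perturbation is exact. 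So $\HMfrom(Y,\s_\xi)\neq 0$ does not by itself yield irreducible solutions, i.e.\ solutions with $\phi\not\equiv 0$, which are the only ones whose zero set $\alpha^{-1}(0)$ carries any Reeb-orbit information. What actually produces them in Taubes' argument is a grading/spectral-flow analysis: as $r\to\infty$ the degrees occupied by the reducible generators drift without bound (the spectral flow of the perturbed Dirac operator grows with $r$), so a nonzero class in a fixed degree---and one needs non-vanishing in infinitely many degrees, which is exactly what the comparison with $\HMbar$ provides---must eventually be carried by irreducibles; the same grading-versus-energy bookkeeping is what gives the uniform bound on $E_n=i\int_Y\lambda\wedge F_{A_n}$. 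That energy bound is therefore Floer-theoretic input, not an a priori consequence of the pointwise elliptic estimates you group under ``Taubes' analysis,'' and without it the compactness/concentration step cannot start.

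Second, your two halves apply in complementary cases and never combine into a contradiction. The non-vanishing you invoke ($\HMfrom$ agreeing with $\HMbar$ in very negative degrees) is valid only when $c_1(\s_\xi)$ is torsion, since $\HMbar(Y,\s)=0$ whenever $c_1(\s)$ is non-torsion; the non-torsion case requires a separate treatment in Taubes' paper. On the other hand, your closing contradiction---``$F_{A_n}$ is too small to represent $c_1(\s_\xi)$''---has no force precisely when $c_1(\s_\xi)$ is torsion (for instance $Y=S^3$, the original case of the Weinstein conjecture), because a torsion class is represented by forms of arbitrarily small norm, and small-energy, nearly reducible configurations are then perfectly consistent. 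So in the main case your reductio yields nothing once the limiting orbit set is assumed empty; ruling out that scenario again goes through the grading and energy estimates attached to the chosen Floer class, which is where the real content of the proof lies beyond the (correct) general outline you give.
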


The idea is to use the non-vanishing of monopole Floer homology to produce solutions to the Seiberg-Witten equations on $Y$. Then, one deforms these equations so that in the limit, the spinor is close to zero only on a set that approximates the periodic orbits of the Reeb vector field $R$.

\section{Heegaard Floer homology}
\label{sec:HF}

The definition of Heegaard Floer homology \cite{HolDisk} starts with a Heegaard splitting $Y = U_0 \cup_{\Sigma} U_1$, just as in \eqref{eq:Heegaard}. We then do Lagrangian Floer homology on a symplectic manifold associated to the surface $\Sigma$. In the case of symplectic instanton homology discussed in Section~\ref{sec:sinst}, the symplectic manifold was a moduli space of flat connections on $\Sigma$; these flat connections are solutions to a two-dimensional reduction of the ASD Yang-Mills equations. In the Heegaard Floer setting, we use instead the vortex equations on $\Sigma$, which are a reduction of the Seiberg-Witten equations. The moduli spaces of vortices are symmetric products of $\Sigma$. It is most convenient to consider the $g$th symmetric product, where $g$ is the genus of $\Sigma$:
$$ \Sym^g \Sigma = (\Sigma \times \dots \times \Sigma)/ \mathfrak{S}_g.$$
Here, we take the Cartesian product of $g$ copies of $\Sigma$, and then divide by the natural action of the symmetric group $\mathfrak{S}_g$.

To construct Lagrangians in $\Sym^g(\Sigma)$, pick simple closed curves $\alpha_1, \dots, \alpha_g \subset \Sigma$ that are homologically linearly independent in $\Sigma$, and bound disks in the handlebody $U_0$; pick also similar curves $\beta_1, \dots, \beta_g$ that bound disks in $U_1$. The set of data $(\Sigma; \alpha_1, \dots, \alpha_g; \beta_1, \dots, \beta_g)$ is called a {\em Heegaard diagram} for $Y$. Consider the tori
$$ \Ta = \alpha_1 \times \dots \times \alpha_g, \ \ \Tb = \beta_1 \times \dots \times \beta_g \subset \Sym^g(\Sigma).$$

Heegaard Floer homology is the Lagrangian Floer homology of $\Ta$ and $\Tb$ inside $\Sym^g(\Sigma)$. To get the full power of the theory, one picks a basepoint $z \in \Sigma$ (away from the $\alpha$ and $\beta$ curves) and then keeps track of the relative homotopy classes of pseudo-holomorphic disks through their intersection with the divisor  
$$ \{z \} \times \Sym^{g-1}(\Sigma) \subset \Sym^g(\Sigma).$$

This way one obtains three versions of Heegaard Floer homology, denoted $\HFp, \HF^-$ and $\HF^{\infty}$. They are modules over the ring $\Z[U]$, and correspond to the monopole Floer homologies $\HMto, \HMfrom$ and $\HMbar$, respectively. In this section we will focus on $\HFp$. By setting $U=0$ in the chain complex for $\HFp$ and then taking homology, one obtains a somewhat weaker theory denoted $\HFhat$, which is the Lagrangian Floer homology of $\Ta$ and $\Tb$ inside $\Sym^g(\Sigma - \{ z \}).$

Just as monopole Floer homology, Heegaard Floer homology decomposes according to the $\spinc$ structures on $Y$. For example:
$$ \HFp(Y) = \bigoplus_{\s \in \spinc} \HFp(Y, \s).$$

Among the Floer homologies of $3$-manifolds, Heegaard Floer homology is the most computationally tractable:
\begin{itemize} \itemsep5pt

\item The generators of the Heegaard Floer chain complex are $n$-tuples of intersection points between $\Ta$ and $\Tb$, so they can be easily read from a Heegaard diagram;

\item There are exact triangles relating $\HFp$ of different surgeries on a null-homologous knot, in an arbitrary $3$-manifold. Using these triangles, one can inductively compute $\HFp$ for large classes of plumbed manifolds, such as all Seifert fibered rational homology spheres \cite{Plumbed, Nemethi}. More generally, $\HFp$ for all Seifert fibered manifolds was computed in \cite{RatSurg};

\item By iterating the exact triangles, one can study $\HFp$ of the double branched cover of $S^3$ over a knot $K$, and relate it to the Khovanov homology of $K$ \cite{BrDCov}. In particular, one can explicitly calculate $\HFp$ of double branched covers over alternating knots;

\item There are surgery formulas that express $\HFp$ of a surgery on a knot in terms of a Floer complex associated to the knot \cite{IntSurg, RatSurg}. The knot Floer complex was defined in \cite{RasmussenThesis, Knots}, and knot Floer homology has many applications of its own; see \cite{HFKsurvey} for a survey;

\item The knot Floer complex of a knot (or link) in $S^3$ admits a combinatorial description in terms of grid diagrams \cite{MOS};

\item Using a special class of diagrams called nice, one can find a combinatorial description of $\HFhat(Y)$ for any $3$-manifold $Y$ \cite{SarkarWang};

\item There is also a surgery formula for links \cite{LinkSurg}. This expresses the Heegaard Floer homology of an integral surgery on a link in terms of Floer data associated to the link and its sublinks;

\item By combining the link surgery formula with the grid diagram technique for links in $S^3$, one arrives at a combinatorial description of $\HFp$ and $\HFhat$ for all $3$-manifolds. One also gets such a description for the related mixed invariants of $4$-manifolds from \cite{HolDiskFour} (analogues of the Seiberg-Witten invariants). See \cite{MOT};

\item There is a Heegaard Floer invariant for three-manifolds with boundary, called bordered Floer homology \cite{LOT}. If we decompose a three-manifold $Y$ along a surface, $\HFhat(Y)$ can be recovered as the tensor product of the bordered invariants of the two pieces. There is also an extension of this theory to $3$-manifolds with codimension $2$ corners \cite{DMcornered, DLMcornered};

\item Using bordered Floer homology, one can give an effective algorithm for computing $\HFhat$ for  $3$-manifolds \cite{LOTHF}. Further, one can compute $\HFhat$ in infinite families, for example for graph manifolds \cite{Hanselman}.
\end{itemize}

Next, let us discuss several useful properties of Heegaard Floer homology; when combined with the calculational techniques above, they lead to many topological applications. 

Some properties were inspired by the corresponding ones in gauge theory (for instanton and/or monopole Floer homology):
\begin{enumerate}[(i)] \itemsep5pt
\item A cobordism between $3$-manifolds (together with a $\spinc$ structure on that cobordism) induces a map between the respective Heegaard Floer homologies \cite{HolDiskFour}. One can also define invariants of closed $4$-manifolds, which behave similarly (and are conjecturally identical) to the Seiberg-Witten invariants;

\item There is a surjective homomorphism $d: \Theta_H^3 \to \Z$ given by
$$ d(Y) = \min \{r \mid \exists \ x \neq 0 \text{ s.t. } \forall l, \ x \in \operatorname{im}(U^l: \HFp_{r+2l}(Y) \to \HFp_r(Y)) \},$$
and we have the analog of the inequality \eqref{eq:Fineq}; cf. \cite{AbsGraded}. (Note that $d$ corresponds to $2\delta$.) More generally, we can define $d(Y, \s)$ for any rational homology sphere and $\s \in \spinc(Y)$;

\item Heegaard Floer homology detects the Thurston norm of $3$-manifolds (which gives the minimal complexity of surfaces in a given homology class) \cite{GenusBounds};

\item A contact structure $\xi$ on $Y$ induces an element $c(\xi) \in \HFhat(-Y)/\pm 1$ \cite{HolDiskContact}. We have $c(\xi)=0$ for overtwisted contact structures, and $c(\xi)\neq 0$ for symplectically semi-fillable contact structures \cite{GenusBounds};

\item We say that a rational homology sphere $Y$ is an L-space if $\HFhat(Y, \s) = \Z$ for any $\s \in \spinc(Y)$. If $Y$ is an L-space, then $Y$ does not admit a co-orientable taut foliation  \cite{GenusBounds}.
\end{enumerate}

Other properties of Heegaard Floer homology were developed first in this setting (and sometimes  inspired similar results in gauge theory). This is the case with fiberedness detection \cite{Ghiggini, NiFibered, JuhaszDecompose, Ni3}, with surgery formulas \cite{IntSurg, RatSurg, LinkSurg}, and with extending Heegaard Floer homology to knots \cite{Knots, RasmussenThesis}, sutured $3$-manifolds \cite{Juhasz}, and bordered $3$-manifolds \cite{LOT}. 

\medskip

We now list a few concrete topological applications of Heegaard Floer homology.
\medskip

By making use of the contact invariant $c(\xi)$, one can study tight contact structures on various classes of $3$-manifolds. For example: 
\begin{theorem}[Lisca-Stipsicz \cite{LiscaStipsicz2, LiscaStipsicz}] 
A closed, oriented, Seifert fibered $3$-manifold $Y$ admits a positive tight contact structure if and only if $Y$ is not diffeomorphic to $(2n-1)$ surgery on the torus knot $T_{2,2n+1}$ for any $n \geq 1$.
\end{theorem}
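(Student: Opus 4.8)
The plan is to prove both implications through a case analysis over the Seifert invariants of $Y$, constructing a contact structure explicitly in the ``if'' direction and obstructing one via the Heegaard Floer contact invariant in the ``only if'' direction. First I would reduce to the case of a \emph{small} Seifert fibered rational homology sphere $Y=M(e_0;r_1,r_2,r_3)$, fibered over $S^2$ with exactly three cone points: a Seifert fibered space whose base has positive genus, or is $S^2$ with at most two or at least four cone points, carries a horizontal (fiberwise transverse) positive contact structure, and such a structure is universally tight, while none of the manifolds $S^3_{2n-1}(T_{2,2n+1})$ has this form. I would then normalize so that $0<r_i<1$ and record the rational Euler number $e(Y)=e_0+r_1+r_2+r_3$.

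For the ``if'' direction I would split on the sign of $e(Y)$. When $e(Y)<0$ the manifold again carries a horizontal positive contact structure, hence a tight one; when $e(Y)=0$ it is finitely covered by a torus bundle and carries a tight contact structure by the classification in that setting. The substantive range is $e(Y)>0$: here I would start from the canonical plumbing surgery diagram of $Y$ and, by handle slides and blow-ups, attempt to bring it into a form in which every framing is at most $-2$; whenever this succeeds, Legendrian-realizing the unknots in $(S^3,\xi_{\mathrm{st}})$ with each $2$-handle attached one framing below the Thurston--Bennequin number yields a Stein filling, hence a tight contact structure, whose contact invariant in $\HFhat(-Y)$ is non-zero by \cite{HolDiskContact, GenusBounds}. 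The point to verify is that this can be arranged for every small Seifert fibered space with $e(Y)>0$ \emph{except} those diffeomorphic to $S^3_{2n-1}(T_{2,2n+1})$; lens spaces and $S^3$, which also satisfy $e(Y)>0$, are tight for classical reasons.

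For the ``only if'' direction, suppose $\xi$ were a positive tight contact structure on $Y=S^3_{2n-1}(T_{2,2n+1})$. Since $T_{2,2n+1}$ is an L-space knot of genus $n$ and the surgery coefficient $2n-1$ equals $2g-1$, the manifold $Y$ is an L-space; hence $\HFhat(-Y,\s)\cong\Z$, concentrated in a single grading, for each $\s\in\spinc(Y)$, and the surgery exact triangle determines all the $d$-invariants $d(Y,\s)$ explicitly from the knot Floer complex of $T_{2,2n+1}$. On the contact side I would run convex surface theory along the vertical incompressible tori of the Seifert fibration --- the strategy by which Etnyre--Honda settled the base case $n=1$, namely $-\Sigma(2,3,5)$ --- to bound the admissible dividing configurations and thereby present $\xi$, up to isotopy, as one of an explicit finite list of contact structures on $Y$ obtained by Legendrian surgery on links in $(S^3,\xi_{\mathrm{st}})$. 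Each candidate on this list is Stein fillable, so its contact invariant is non-zero and must generate $\HFhat(-Y,\s_\xi)$; its grading --- pinned down by the three-dimensional invariant $d_3$ of the candidate --- is therefore forced to equal the unique grading in which $\HFhat(-Y,\s_\xi)$ is supported. One shows this equality fails for every candidate, which eliminates them all, and rules out any residual tight structure with vanishing contact invariant by a direct bypass argument on the vertical tori. I expect this last step to be the main obstacle --- obtaining the contradiction uniformly in $n$, where the elementary bypass bookkeeping that settles $-\Sigma(2,3,5)$ becomes intractable as the multiplicities grow --- and it is precisely here that the exact Heegaard Floer computation of $\HFhat(-Y)$ and of the $d$-invariants for L-space surgeries on $T_{2,2n+1}$, together with the vanishing/non-vanishing theory of the contact invariant, is indispensable.
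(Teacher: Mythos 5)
There is no proof in this survey to compare against --- the paper only states the result and cites Lisca--Stipsicz --- so the benchmark is their original argument, and measured against it your outline has a genuine gap on each side of the equivalence. In the ``if'' direction, your plan for the substantive range $e(Y)>0$ is to massage the plumbing diagram until all framings are $\leq -2$ and then Legendrian realize to get a Stein filling, so that tightness follows from fillability (or from $c(\xi)\neq 0$ for a fillable structure). This cannot cover all the cases: there are small Seifert fibered spaces outside the excluded family, e.g.\ $-\Sigma(2,3,4)$ and $-\Sigma(2,3,3)$ (that is, $+4$- and $+3$-surgery on the right-handed trefoil), which by a gauge-theoretic theorem of Lisca admit \emph{no} symplectically fillable contact structure at all, and yet do carry tight ones. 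Producing tight structures on exactly such manifolds is the point of the Lisca--Stipsicz papers \cite{LiscaStipsicz2, LiscaStipsicz}: the candidates are given by contact $(\pm 1)$-surgery diagrams (not Stein diagrams), and tightness is proved by showing that the Ozsv\'ath--Szab\'o contact invariant $c(\xi)\in\HFhat(-Y)$ is non-zero, using its naturality under the maps induced by the surgery cobordisms (and, for part of the family, twisted-coefficient versions). So the step you flag as ``the point to verify'' is not a verification but the crux, and the mechanism you propose for it provably fails.

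In the ``only if'' direction the proposed contradiction is internally inconsistent. If a candidate structure on $Y=S^3_{2n-1}(T_{2,2n+1})$ really were Stein fillable, then $c(\xi)$ would be a non-zero homogeneous element of $\HFhat(-Y,\s_\xi)\cong\Z$, and a non-zero element can only lie in the grading where that group is supported; the grading of $c(\xi)$ is not an independent quantity that could ``fail to match'' the $d$-invariant, so no contradiction can be extracted from such a comparison. (Indeed, if every tight structure on these manifolds were fillable, Floer homology would be unnecessary: non-fillability of $-\Sigma(2,3,5)$ was already known by gauge theory.) In the actual argument, convex surface theory reduces any putative tight $\xi$ to finitely many contact-surgery presentations which involve $(+1)$-surgeries and are not a priori fillable, and Heegaard Floer theory is used to rule out each of these candidates (vanishing of their contact invariants together with further contact-topological input), uniformly in $n$; your residual ``direct bypass argument on the vertical tori'' for structures with vanishing invariant is exactly the part that does not scale beyond the $n=1$ case of Etnyre--Honda, and the proposal offers no substitute for it.
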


Using the fiberedness and genus detection properties of knot Floer homology, one gets: 
\begin{theorem}[Ghiggini \cite{Ghiggini}]
If $K \subset S^3$ and $r \in \Q$ are such that $S^3_r(K)$ is the Poincar\'e sphere, then $K$ is the trefoil.
\end{theorem}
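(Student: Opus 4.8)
The plan is to reduce the statement, via the structure theory of knot Floer homology, to the classical classification of genus-one fibered knots in $S^3$. First I observe that, since the Poincar\'e sphere $P = \Sigma(2,3,5)$ is an integral homology sphere and $H_1(S^3_{p/q}(K)) \cong \Z/p$, the surgery slope must be $r = \pm 1/n$ for some integer $n \geq 1$. Replacing $K$ by its mirror if necessary — which replaces $P$ by $-P$, still a Poincar\'e homology sphere with $\HFhat$ of rank one, and leaves the conclusion invariant — I may assume $r = 1/n > 0$. Moreover $P$ is an L-space, i.e. $\HFhat(P) \cong \Z$, by the plumbing calculation of \cite{Plumbed}. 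Hence $K$ carries a positive L-space surgery; that is, $K$ is an L-space knot.

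Next I invoke the description of L-space surgery slopes that comes out of the mapping-cone surgery formulas \cite{IntSurg, RatSurg}: for an L-space knot $K$, the positive slopes $p'/q'$ for which $S^3_{p'/q'}(K)$ is an L-space are exactly those with $p'/q' \geq 2g(K) - 1$, where $g(K)$ is the Seifert genus, which is itself detected by knot Floer homology \cite{GenusBounds}. Applying this to the slope $1/n \leq 1$ forces $2g(K) - 1 \leq 1$, so $g(K) \leq 1$; and $g(K) \geq 1$ because $K$ is not the unknot (otherwise $S^3_{1/n}(K) = S^3 \neq P$). Therefore $g(K) = 1$, and then $1/n \geq 2g(K) - 1 = 1$ forces $n = 1$, so $P = S^3_{\pm 1}(K)$.

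Thus $K$ is a genus-one L-space knot. The staircase form of the knot Floer homology of an L-space knot then shows that $\widehat{\mathit{HFK}}(K,i)$ has rank one for $i \in \{-1,0,1\}$ and vanishes otherwise; in particular $\widehat{\mathit{HFK}}(K,1) \cong \Z$ and the Alexander polynomial is $\Delta_K(t) = t - 1 + t^{-1}$, that of the trefoil. Since knot Floer homology detects fibered knots (\cite{Ghiggini} in genus one, \cite{NiFibered} in general), $K$ is a fibered knot of genus one. The fiber is then a once-punctured torus and the monodromy an element of $\mathrm{SL}_2(\Z)$, and by the classical classification the only genus-one fibered knots in $S^3$ are the left-handed trefoil, the right-handed trefoil, and the figure-eight knot $4_1$. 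The figure-eight is excluded because $\Delta_{4_1}(t) = -t + 3 - t^{-1} \neq \Delta_K(t)$ (equivalently, $4_1$ is not an L-space knot). Hence $K$ is a trefoil.

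The crux of this argument is the input used in the second and third paragraphs: the exact determination of which Dehn surgeries on a knot produce L-spaces, and the fact that L-space knots are fibered with knot Floer homology rigidly controlled by the Alexander polynomial. Both rest on the full strength of the mapping-cone surgery formulas \cite{IntSurg, RatSurg} and of the fiberedness-detection theorem for $\widehat{\mathit{HFK}}$; by comparison, the reduction to slope $\pm 1$ and the concluding appeal to the classification of genus-one fibered knots are routine. (Ghiggini's original proof predates parts of this technology and instead proceeds partly through contact geometry — exploiting, for instance, that the Poincar\'e sphere supports a unique tight contact structure — but the route sketched above is the cleanest one available given the tools surveyed here.)
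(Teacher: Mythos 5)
Your argument is correct. Note first that the survey gives no proof of this statement: it records the theorem with a one-line attribution to Ghiggini and the remark that it follows from the genus- and fiberedness-detection properties of knot Floer homology, so the comparison to make is with Ghiggini's original argument rather than with a proof in the text. Your route --- reduce to slope $\pm 1/n$ by the homology of surgery, mirror to assume a positive L-space slope, invoke the characterization of L-space slopes of a nontrivial L-space knot as exactly $[2g(K)-1,\infty)$ from the mapping-cone formulas \cite{IntSurg, RatSurg} to force $g(K)=1$ and $n=1$, use the alternating-coefficient/staircase structure of L-space knots \cite{OSLens} to get $\Delta_K(t)=t-1+t^{-1}$, apply fiberedness detection \cite{Ghiggini, NiFibered}, and finish with the classical classification of genus-one fibered knots --- is complete modulo these standard cited results, and you correctly dispose of the two delicate points (the unknot must be excluded before applying the slope interval, and mirroring preserves both the L-space condition and the conclusion). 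This is genuinely different in organization from Ghiggini's original proof, which predates the full rational-surgery/L-space-slope technology and instead pins down $\widehat{\mathit{HFK}}(K)$ using the integer surgery formula together with correction-term ($d$-invariant) arguments tailored to $\Sigma(2,3,5)$, before applying his genus-one fiberedness detection theorem. What each buys: your version is shorter, and in fact proves the stronger statement that any integral homology sphere L-space obtained by surgery on a nontrivial knot is $\pm 1$-surgery on a trefoil, at the price of using machinery that postdates the theorem; the original is self-contained in the technology of its time and also pins down the chirality of the trefoil and the slope once orientations are fixed, which your mirror-symmetric argument deliberately ignores.
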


By combining Ghiggini's methods with the surgery formula from \cite{RatSurg}, one obtains a surgery characterization (an analog of Theorem~\ref{thm:KMOS}) for a few non-trivial knots:
\begin{theorem}[Ozsv\'ath-Szab\'o \cite{SurgeryTE}]
Let $K$ be the left-handed trefoil, the right-handed trefoil, or the figure-eight knot. Suppose $K' \subset S^3$ is a knot such that there is an orientation-preserving diffeomorphism $S^3_r(K) \cong S^3_r(K')$, for some $r \in \Q$. Then $K=K'$.
\end{theorem}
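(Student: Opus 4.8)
The plan is to run Ghiggini's strategy \cite{Ghiggini} with the rational surgery formula of \cite{RatSurg} as the computational engine. Suppose $S^3_r(K) \cong S^3_r(K')$ by an orientation-preserving diffeomorphism, where $K$ is one of the three knots in the statement (each a fibered knot of genus one) and $K'$ is an arbitrary knot in $S^3$. An orientation-preserving diffeomorphism identifies, $\spinc$ structure by $\spinc$ structure, the Heegaard Floer modules $\HFp$ together with their absolute $\Q$-gradings and $\Z[U]$-module structure. By the mapping-cone description of \cite{RatSurg}, these invariants of $S^3_r(\,\cdot\,)$ are computed from the knot Floer complex $\mathit{CFK}^\infty$, so the hypothesis becomes a strong system of constraints on $\mathit{CFK}^\infty(K')$. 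The goal is to show these constraints force (a) $K'$ to be a fibered knot of genus one, and then (b) to pin down which one.

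For (a): the crux is the genus bound. If $g(K') \ge 2$, then $\widehat{\mathit{HFK}}(K', g(K'))$ is nonzero in Alexander grading $g(K') \ge 2$ by the genus-detection theorem \cite{GenusBounds}, and feeding this into the surgery mapping cone produces extra ``reduced'' homology in the truncated complexes $A^+_s$ for $|s| < g(K')$, hence extra summands (and hence a larger total rank, or a mismatched pattern of $d$-invariants and gradings) in $\HFp(S^3_r(K'))$ compared to what the genus-one complex of $K$ can give. This contradicts $S^3_r(K) \cong S^3_r(K')$, so $g(K') = 1 = g(K)$. Next, the Casson--Walker invariant of $S^3_r(K')$ is recoverable from $\HFp$ (via $d$-invariants and $\operatorname{rk}\HFp_{\mathrm{red}}$) and depends on the knot only through $\Delta''(1)$; matching it pins down the Alexander polynomial of the genus-one knot $K'$, so $\Delta_{K'} = \Delta_K$ and in particular $\widehat{\mathit{HFK}}(K',1) \cong \Z$. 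By fiberedness detection \cite{Ghiggini, Ni3}, $K'$ is fibered. A genus-one fibered knot in $S^3$ has once-punctured-torus page, and the classical classification of such open books shows $K'$ is the left-handed trefoil, the right-handed trefoil, or the figure-eight knot.

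For (b): since $\Delta_{4_1}(t) = -t + 3 - t^{-1} \neq t - 1 + t^{-1} = \Delta_{T_{2,\pm 3}}(t)$, the equality $\Delta_{K'} = \Delta_K$ separates the figure-eight case from the two trefoil cases. To distinguish the two trefoils one uses $S^3_r(T_{2,-3}) = -\,S^3_{-r}(T_{2,3})$ as oriented manifolds: the two candidate surgered manifolds differ by orientation reversal, which negates all correction terms $d(\,\cdot\,,\s)$ (and flips the sign of the Casson invariant when $r = 1/n$). One checks that $S^3_r(T_{2,3})$ is chiral for every relevant $r$ (e.g.\ via its correction terms), so no orientation-preserving diffeomorphism can identify it with $S^3_r(T_{2,-3})$. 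Hence $K' = K$.

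The main obstacle is the genus bound in step (a): ruling out $g(K') \ge 2$ requires a genuinely careful bookkeeping in the rational surgery mapping cone of \cite{RatSurg}, tracking how the number and placement of the ``staircase'' and ``box'' summands of $\mathit{CFK}^\infty(K')$ --- constrained but not determined by $\Delta_{K'}$ and $g(K')$ alone --- affect the ranks and absolute gradings of $\HFp(S^3_r(K'))$, and comparing against the explicit genus-one computation for $K$. One must also handle that $r$ is an arbitrary rational, either by working with the general mapping-cone formula throughout or by first reducing to large (or integral) surgeries using the relation between $S^3_{p/q}(K)$ and $S^3_N(K)$. Finally, orientation bookkeeping is delicate here, since the figure-eight is amphichiral while the two trefoils are distinct mirror images, so every comparison of surgered manifolds must respect orientations (and one should keep in mind that the analogous monopole statement for $0$-surgeries is exactly Theorem~\ref{thm:KMOS}, which this argument parallels).
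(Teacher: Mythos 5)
Your outline follows essentially the same route the paper attributes to Ozsv\'ath--Szab\'o \cite{SurgeryTE}: this survey gives no proof of the theorem, saying only that it is obtained ``by combining Ghiggini's methods with the surgery formula from \cite{RatSurg},'' which is precisely your plan (genus and fiberedness detection \`a la \cite{Ghiggini, GenusBounds} fed through the rational surgery mapping cone, then the classification of genus-one fibered knots and orientation-sensitive $d$-invariant comparisons to separate the two trefoils). Your sketch correctly identifies the key ingredients and the delicate points (the genus bound via the mapping cone, and the orientation bookkeeping $S^3_r(T_{2,-3}) \cong -S^3_{-r}(T_{2,3})$), so it is consistent with the cited proof strategy.
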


Using $d$ invariants and surgery formulas, one gets constraints on the knots in $S^3$ that can produce lens spaces by surgery:
\begin{theorem}[Ozsv\'ath-Szab\'o \cite{OSLens}]
If $K \subset S^3$ is such that $S^3_r(K)$ is a lens space for some $r \in \Q$, then the Alexander polynomial of $K$ is of the form
$$\Delta_K(q) = \sum_{j=-k}^k (-1)^{k-j} q^{n_j},$$
for some $k \geq 0$ and integers $n_{-k} < \dots < n_k$ such that $n_{-j} = - n_j$.
\end{theorem}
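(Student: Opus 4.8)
The plan is to combine the surgery formula for Heegaard Floer homology with the structural constraints that $d$-invariants impose on a manifold that is realized as surgery on a knot.

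First I would recall that if $S^3_{p/q}(K)$ is a lens space $L$, then in particular it is an L-space, so we are in the setting where the knot Floer complex $CFK^\infty(K)$ is highly constrained. Using the large-surgery formula from \cite{IntSurg, RatSurg}, the group $\HFp(S^3_N(K),\s)$ for large $N$ is computed from the subquotient complexes $A_s^+ = C\{\max(i,j-s)\geq 0\}$ of $CFK^\infty(K)$, while $\HFp$ of a lens space is that of an L-space: a single tower $\mathcal{T}^+$ in each $\spinc$ structure, with $d$-invariant determined by the (known) linking form of $L$. Demanding that each $A_s^+$ have the homology of a single tower forces the ``staircase'' shape of $CFK^\infty(K)$: in each filtration level the complex must be as simple as possible, and in particular the vertical (and horizontal) homology is $\Z$.

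Next I would extract the Alexander polynomial from this staircase. Since $\Delta_K(q) = \sum_s \chi(\widehat{HFK}(K,s))\, q^s$, and the L-space condition forces $\widehat{HFK}(K,s)$ to be either $0$ or $\Z$ in each Alexander grading $s$ — with the nonzero groups occurring in gradings $n_{-k} < \dots < n_k$ and with signs that alternate because consecutive generators of the staircase differ in Maslov grading by an odd amount — one reads off exactly $\Delta_K(q) = \sum_{j=-k}^k (-1)^{k-j} q^{n_j}$. The symmetry $n_{-j} = -n_j$ is the standard symmetry of knot Floer homology, $\widehat{HFK}(K,s) \cong \widehat{HFK}(K,-s)$ (up to grading shift), combined with $\Delta_K(q) = \Delta_K(q^{-1})$.

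The main obstacle is the step that pins down the staircase shape: one must show that the L-space condition on \emph{all} the large surgeries $S^3_N(K)$, together with the fact that the $d$-invariants of the lens space are prescribed by its linking form and must match the values $d(S^3_N(K),\s)$ computed from the $A_s^+$, actually forces $\widehat{HFK}(K,s) \in \{0,\Z\}$ for every $s$ and forces the reduced part of $CFK^\infty$ to vanish. This requires a careful rank/grading bookkeeping argument comparing the two computations of $\HFp(S^3_N(K))$: the one via the surgery formula and the one via the known homology type of the lens space. (In fact, since $L$ is a lens space and not merely an L-space, one also has its $d$-invariants exactly, via Ozsv\'ath–Szab\'o's recursion for $d$-invariants of lens spaces, which makes the matching quantitative.) Once the staircase is established, deducing the polynomial form is essentially bookkeeping with Euler characteristics.
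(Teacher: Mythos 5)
The survey does not actually prove this theorem---it is stated with a citation to Ozsv\'ath--Szab\'o \cite{OSLens}---and your outline is essentially their argument: use the large-surgery formula to identify $\HFp(S^3_N(K),\s)$ with $H_*(A_s^+)$, use the L-space condition to force each $A_s^+$ to have the homology of a single tower, deduce that $\widehat{\mathit{HFK}}(K,s)$ is $0$ or $\Z$ with alternating Maslov parities (the ``staircase''), and then read off $\Delta_K$ as the Euler characteristic, with $n_{-j}=-n_j$ coming from the symmetry of knot Floer homology. So the route is the right one and matches the source.

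Two remarks on the details you left implicit. First, you quietly assume that the large integer surgeries $S^3_N(K)$ are L-spaces; this needs the standard propagation lemma (via the surgery exact triangle) that if $S^3_{p/q}(K)$ is an L-space for some $p/q>0$ then so is $S^3_{p'/q'}(K)$ for all $p'/q'\geq p/q$, together with the mirror trick to handle negative slopes (and the observation that $r\neq 0$ since a lens space is a rational homology sphere). Second, the $d$-invariants of the lens space and its linking form are not needed for this statement: the rank condition $\operatorname{rk}\widehat{\HF}(S^3_N(K),\s)=1$ in every $\spinc$ structure, fed into an inductive argument over the Alexander filtration of the complexes $\hat{A}_s$ (together with $\chi(H_*(\hat{A}_s))=\pm1$), is what forces $\widehat{\mathit{HFK}}(K,s)\in\{0,\Z\}$ and the alternation of gradings; the precise $d$-invariants enter only in the finer realization results (e.g.\ which lens spaces arise, as in Greene's theorem), not in the Alexander polynomial constraint. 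The central ``rank/grading bookkeeping'' step you flag as the main obstacle is exactly the inductive argument carried out in \cite{OSLens}; your proposal identifies the right mechanism but does not execute it.
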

Combining this with fiberedness detection \cite{NiFibered}, one obtains the additional constraint that $K$ is fibered (under the same hypotheses).

If surgery on a knot $K$ gives a lens space, one can also obtain inequalities between the surgery slope and the genus of the knot, $g(K)$. For example:
\begin{theorem}[Rasmussen \cite{RasmussenGT}]
Let $K \subset S^3$ be a knot such that $S^3_r(K)$ is a lens space, for some $r \in \Q$. Then:
$$ |r| \leq 4g(K)-3.$$
\end{theorem}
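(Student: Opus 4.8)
The plan is to combine the Heegaard Floer mapping-cone surgery formula with the rigidity of the correction terms ($d$-invariants) of lens spaces.

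First, two reductions. Mirroring $K$ replaces $S^3_r(K)$ by $-S^3_{-r}(\bar K)$, preserves the genus, and sends lens spaces to lens spaces, so we may assume $r>0$; and the case in which $K$ is a torus knot is classical (Moser's classification of surgeries on torus knots), so assume $K$ is not a torus knot. Then the cyclic surgery theorem of Culler--Gordon--Luecke--Shalen forces $r = p \in \Z$, so the surgery is integral. Now $L := S^3_p(K)$ is a lens space, hence an L-space; since positive surgery on $K$ yields an L-space, $K$ is an L-space knot. By Ozsv\'ath--Szab\'o (the preceding theorem) $\Delta_K$ has the stated alternating form, $g := g(K) = \deg \Delta_K$, and the full knot Floer complex $CFK^\infty(K)$ is the ``staircase'' determined by $\Delta_K$. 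The same analysis gives $p \ge 2g-1$; the content of the theorem is the complementary, \emph{upper} bound on $p$.

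Next, I would apply the integer surgery formula. For an L-space knot each large-surgery group is a single tower, so the mapping cone computing $\HFp(S^3_p(K))$ is governed entirely by the nonnegative integers $V_s = V_s(K)$ (the torsion coefficients of $K$), which satisfy $V_s = V_{-s}$, $V_s - V_{s+1} \in \{0,1\}$, and $V_s = 0 \iff |s| \ge g$. This yields, for each $\s \in \spinc(L) \cong \Z/p$,
\[
 d(S^3_p(K), \s) \;=\; d(L(p,1), \s) \;-\; 2\max\bigl(V_{[\s]}, V_{p-[\s]}\bigr),
\]
with $[\s]$ the representative in $\{0,\dots,p-1\}$. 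The decisive point is that the left-hand side is the list of correction terms of the \emph{lens space} $L$, so it also obeys the Ozsv\'ath--Szab\'o recursion for $d$-invariants of lens spaces; matching that recursion against the explicit, slowly varying perturbation built from the $V_s$ (nonzero only for $|s|<g$, and changing by at most $1$ at each step) limits how far the $V$'s can spread, bounding $p$ in terms of $g$. The cleaner bookkeeping, which I would actually carry out, is the dual picture: let $K' \subset L$ be the core of the surgery solid torus. Surgery on $K'$ returns $S^3$, so $K'$ is a Floer-simple knot in $L$ --- its knot Floer homology has the minimum possible rank $|H_1(L)| = p$, and its knot Floer complex is pinned down by the Turaev torsion of $(L, K')$, which the surgery formula ties back to $CFK^\infty(K)$. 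The rational Seifert genus of $K'$ is bounded below in terms of $p$ (a homologically essential Floer-simple knot in a lens space of large order must be complicated) and above in terms of $g(K)$ (through that tie), and putting the two estimates together produces $|r| \le 4g(K) - 3$ (recall $r = p > 0$).

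The main obstacle is this last step: converting the qualitative statement ``the $V_s$ cannot spread too far'' into the sharp linear inequality. This needs a genuinely quantitative description of the admissible sequences of lens-space correction terms --- or, dually, of the Alexander and Maslov gradings in the knot Floer homology of a simple knot in a lens space --- together with careful tracking of grading shifts through the surgery formula in order to translate $\deg \Delta_K$ into a statement about the dual knot $K'$. The remaining ingredients (the reductions, the L-space knot structure theorem, and the shape of the surgery formula) are either quoted or formal.
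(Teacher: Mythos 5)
The survey itself contains no proof of this statement---it is quoted from Rasmussen---so the only meaningful comparison is with the argument in \cite{RasmussenGT}. Your outline assembles the correct circle of ideas: integrality of the slope via the cyclic surgery theorem \cite{CGLS}, the L-space-knot/staircase structure forced by the lens space condition \cite{OSLens}, and the surgery formula expressing the correction terms of $S^3_p(K)$ through the integers $V_s$ \cite{IntSurg, NiWu}, to be matched against the $d$-invariants of an honest lens space; this is exactly the toolkit of Rasmussen's and Greene's \cite{Greene} arguments. But the step you yourself flag as ``the main obstacle'' is not a loose end---it is the theorem. Converting the statement that the list $d(S^3_p(K),\s)$ realizes a lens-space pattern into a \emph{linear} inequality between $p$ and $g(K)$, with an explicit constant, is precisely the content of \cite{RasmussenGT}, and neither of your two proposed routes (recursion-matching against the slowly varying $V_s$, or two-sided estimates on the rational genus of the dual Floer-simple knot in $L$) is carried out, nor even reduced to a precisely stated quantitative lemma. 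There are also small slips along the way (for instance $V_s=V_{-s}$ is not the correct symmetry; the symmetric quantity is $H_s=V_{-s}$), but the essential verdict is that the proposal is a plan whose missing middle is the entire result.

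A concrete warning sign you should have caught is the torus-knot case you dismissed as classical: by Moser's classification the right-handed trefoil admits lens-space surgeries at slopes $5$ and $7$, while $4g-3=1$, so the inequality exactly as printed above is false. Rasmussen's actual theorem gives $|r|\leq 4g(K)+3$, and this is sharp: for $T_{2,2n+1}$ one has $g=n$ and a lens-space surgery at slope $4n+3$. So the displayed statement should be read as a typo for the $+3$ bound, and this matters for your write-up in two ways: any computation that appears to output the constant $-3$ necessarily contains an error, and actually checking the one case you declared immediate would have revealed it. If you return to the quantitative step---whether through the $V_s$ bookkeeping or through the dual-knot picture---aim for the $+3$ constant and test the resulting inequality against torus knots before trusting the grading arithmetic.
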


\begin{theorem}[Greene \cite{Greene}]
\label{thm:greene}
Suppose that $K \subset S^3$ is a knot such that $S^3_p(K)$ is a lens space for some positive integer $p$. Then:
$$ 2g(K)-1 \leq p-2\sqrt{(4p+1)/5}$$
unless K is the right-hand trefoil and p = 5.
\end{theorem}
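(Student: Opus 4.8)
The plan is to combine the $d$-invariant surgery formula with a careful lattice-theoretic analysis, following the strategy that Greene introduced. First I would recall that if $S^3_p(K)$ is a lens space $L(p,q)$ for $p>0$, then the orientation-preserving diffeomorphism forces an equality of $d$-invariants: $d(S^3_p(K),\s) = d(L(p,q),\s')$ for a suitable identification of $\spinc$ structures. The left side is computed from the knot Floer complex of $K$ via the mapping-cone surgery formula of \cite{IntSurg}; the key numerical content is captured by the ``torsion coefficients'' $t_i(K)$ of the Alexander polynomial, equivalently the function $V_i(K)$ that appears in the $d$-invariant formula for large surgeries. The right side is purely combinatorial: $d(L(p,q),\s)$ is given by the Ozsv\'ath-Szab\'o recursion. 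Matching the two for all $p$ values of $\s$ gives a system of constraints on the sequence $(t_i)$, hence on $\Delta_K$ and, crucially, on $g(K)$ via the relation between the top nonzero torsion coefficient and the genus (using that the knot Floer homology detects genus, by the fiberedness/genus-detection property cited earlier, so that $t_{g-1}(K)=1$ and $t_i(K)=0$ for $i\ge g$).

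The heart of the argument, and the step I expect to be the main obstacle, is the reformulation of these $d$-invariant constraints as a statement in the geometry of the integer lattice. Greene's insight is that the condition ``$S^3_p(K)$ lens space'' implies that a certain vector $(t_0,2t_1,2t_2,\dots)$ behaves like a ``changemaker'' — that is, each partial sum condition mirrors the way one makes change with coins — and simultaneously that gluing the surgery $4$-manifold to the canonical negative-definite filling of $-L(p,q)$ produces a closed negative-definite $4$-manifold whose intersection form, by Donaldson's theorem (Theorem~1.1), must be the standard diagonal form $-\mathbb{Z}^{n+1}$. One then studies the embedding of the linear lattice $\Lambda(p,q)$ (the lattice associated to the plumbing bounding $L(p,q)$) together with the changemaker vector into $-\mathbb{Z}^{n+1}$. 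I would set this up as an explicit combinatorial classification of such embeddings, working out which changemaker vectors $\sigma$ admit an orthogonal complement isomorphic to a linear lattice; this is where one must handle many cases, controlling the ``coins'' that can appear.

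Once the embeddings are classified, the genus bound should fall out by extracting, from each admissible configuration, the value of $p$ together with the top torsion coefficient index $g(K)-1$, and optimizing. Concretely: the changemaker condition forces $\sum_i \sigma_i^2 = p$ and $\sigma_{n+1}$ to be roughly $\sqrt{(4p+1)/5}$ in the extremal case, and tracking how large the support of $\sigma$ can be (equivalently how large $2g(K)-1$ can be relative to $\sum \sigma_i = \sigma_{n+1}$ and $\sum\sigma_i^2=p$) yields the inequality $2g(K)-1 \le p - 2\sqrt{(4p+1)/5}$, with the Fibonacci-flavored constant $\sqrt{1/5}$ arising from the worst-case ``greedy'' coin system $1,1,2,3,5,\dots$. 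The single exceptional case is exactly the one where this extremal coin system is realized and closes up: $p=5$ with $K$ the right-handed trefoil, which one checks directly. The main work, then, is the lattice-embedding classification; the $d$-invariant input and the genus-detection input are quoted from the surgery formula and from the properties of knot Floer homology recalled above.
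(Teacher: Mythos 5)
Your outline is essentially the same approach the paper points to for Theorem~\ref{thm:greene}: compare $d$-invariants of $S^3_p(K)$ (via the knot surgery formula) with those of the lens space, glue the trace of the surgery to the canonical linear plumbing and apply Donaldson's diagonalization theorem, and then analyze the resulting changemaker-lattice embedding to bound $2g(K)-1$, with the Fibonacci-type extremal changemaker producing the constant $2\sqrt{(4p+1)/5}$ and the $p=5$ trefoil exception. Note that this survey gives no proof beyond that one-sentence description and cites \cite{Greene}, so the only caveat is that the step you defer---the classification of changemaker embeddings and the precise extraction of the genus from the vector $\sigma$---is the bulk of Greene's argument rather than a routine verification.
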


The proof of Theorem~\ref{thm:greene} combines methods based on the $d$ invariant with Donaldson's diagonalizability theorem. Similar techniques allowed Greene to give a complete characterization of which lens spaces can be obtained by integral surgery on a knot in $S^3$.

By using the rational surgery formula from \cite{RatSurg}, one can study cosmetic surgeries, that is, surgeries (with different coefficients) on the same knot, that produce the same $3$-manifold. Building up on work of Ozsv\'ath-Szab\'o \cite{RatSurg}, Ni and Wu proved:
\begin{theorem}[Wu \cite{Wu}; Ni-Wu, \cite{NiWu}]
Suppose $K \subset S^3$ is a non-trivial knot such that $S^3_{r_1}(K) \cong S^3_{r_2}(K)$ (as oriented manifolds) for two distinct rational numbers $r_1$ and $r_2$. Then, $r_1 = -r_2$ and $r_1$ is of the form $p/q$ where $p$, $q$ are coprime integers with $q^2 \equiv -1\pmod{p}.$
\end{theorem}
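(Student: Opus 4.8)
The plan is to play the classical invariants of the surgered manifolds---first homology, linking form, and the Casson--Walker invariant $\lambda$---against the Heegaard Floer correction terms $d(\cdot,\s)$, which for rational surgeries are given by the formula of \cite{RatSurg}. Since the diffeomorphism $S^3_{r_1}(K)\cong S^3_{r_2}(K)$ preserves orientation, it induces an isomorphism of first homology groups together with their linking forms; writing $r_i=p_i/q_i$ in lowest terms, $H_1(S^3_{r_i}(K))\cong\Z/p_i$, so $|p_1|=|p_2|=:p$, and the linking form of $S^3_{p/q}(K)$ is determined by the surgery presentation alone, hence coincides with that of $S^3_{p/q}(U)=L(p,q)$. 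Also $r_1\neq r_2$ excludes $r_1=0$.

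Next I would prove that the slopes are opposite. The surgery formula for the Casson--Walker invariant, $\lambda(S^3_{p/q}(K))=\lambda(L(p,q))+\tfrac{q}{2p}\Delta_K''(1)$ for $p/q>0$ (extended by $\lambda(-Y)=-\lambda(Y)$), combined with the $d$-invariant comparison below, rules out every pair except $r_1=-r_2$; when $\Delta_K''(1)\neq 0$ this already follows from $\lambda$ alone. So from now on $r_1=-r_2=p/q$ with $p,q>0$.

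To obtain the congruence, I would use \cite{RatSurg}: for $0\le i\le p-1$,
$$ d(S^3_{p/q}(K),i)=d(L(p,q),i)-2\max\{V_{\lfloor i/q\rfloor}(K),\,V_{\lceil(p-i)/q\rceil}(K)\}, $$
with $V_s(K)\ge 0$ non-increasing and eventually zero; since $S^3_{-p/q}(K)=-S^3_{p/q}(m(K))$, the identity $d(-Y,\s)=-d(Y,\s)$ gives the corresponding expression for $S^3_{-p/q}(K)$ in terms of $-L(p,q)=L(p,-q)$ and the $V_s(m(K))$. The cosmetic diffeomorphism matches the two $\spinc$-labelled $d$-invariant families up to a bijection of $\Z/p$: summing over $\spinc$ structures forces $\lambda(L(p,q))\le 0$ (after relabelling so that $r_1=p/q$ is the slope with this sign) and a relation tying $\sum V_s(K)$, $\sum V_s(m(K))$ to the Dedekind sum of $(p,q)$, while comparing the families term by term---using the symmetry and monotonicity of the correction terms above and the recursive formula for the correction terms of lens spaces (cf.\ \cite{AbsGraded})---pins down the oriented diffeomorphism type of the lens-space summands, yielding $L(p,q)\cong L(p,-q)$. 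By the classification of lens spaces this is precisely the condition $q^2\equiv -1\pmod p$ (vacuous for $p\le 2$).

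I expect the real difficulty to lie in two places. First, the comparison of $d$-invariants is only meaningful once the $\spinc$ labellings of $S^3_{r_1}(K)$ and $S^3_{r_2}(K)$ are identified through the diffeomorphism, and keeping track of this identification uniformly in $q$---the naturality of the relevant cobordism maps---is exactly what forces one to work with the truncated Heegaard Floer homology of \cite{NiWu}. Second, both the opposite-slopes step and the term-by-term comparison must be carried through in the degenerate range $\Delta_K''(1)=0$, where the Casson--Walker invariant and much of the correction-term data become uninformative; here one descends to the finer $\Z[U]$-module structure of $\HFp$ of the surgeries and to the genus and fiberedness detection of knot Floer homology, as in \cite{Wu,NiWu}, and it is this case-analysis, rather than the clean part of the argument, that demands the most care.
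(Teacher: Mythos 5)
First, a caveat: this survey only states the theorem and cites \cite{Wu, NiWu}; it contains no proof, so there is no argument of the paper to measure yours against, and the comparison below is with the proofs in the cited works. Your sketch does assemble the right toolkit --- $|H_1|$ and linking forms, the Boyer--Lines/Walker surgery formula for $\lambda$, the rational-surgery formula for correction terms with the nonnegative monotone $V_i$, and the mirror/orientation-reversal identities --- and your formula $d(S^3_{p/q}(K),i)=d(L(p,q),i)-2\max\{V_{\lfloor i/q\rfloor},V_{\lceil (p-i)/q\rceil}\}$ is the correct one. But the two load-bearing steps are asserted rather than proved. For the opposite-slopes step: the claim that when $\Delta_K''(1)\neq 0$ the conclusion ``follows from $\lambda$ alone'' is essentially the theorem of Boyer--Lines, and it is not a one-line consequence of the surgery formula --- the single identity $\lambda(L(p,q_1))-\lambda(L(p,q_2))=\tfrac{q_2-q_1}{2p}\Delta_K''(1)$ does not by itself exclude same-sign pairs, since a priori the Dedekind-sum difference could equal the right-hand side. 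Worse, in the case $\Delta_K''(1)=0$ you offer no argument at all: the ``$d$-invariant comparison below'' is set up only after writing $S^3_{-p/q}(K)=-S^3_{p/q}(m(K))$, i.e.\ only for opposite slopes, so invoking it to rule out same-sign pairs is circular. Excluding $q_1\neq q_2$ of the same sign is exactly where \cite{Wu, NiWu} do real work (e.g.\ comparing the total ranks of $\HFhat$ of the two surgeries via the mapping-cone formula, which forces a degenerate, L-space-knot-like regime that then needs separate treatment).

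The derivation of the congruence is also gapped. Your plan is to recover the oriented homeomorphism type of the ``lens-space summands'' from the matched $d$-invariant families, conclude $L(p,q)\cong L(p,-q)$, and quote the lens-space classification. But every $d$-invariant of the surgery is the lens-space value shifted by an unknown nonnegative correction $2\max\{V,H\}$ depending on $K$ (resp.\ $m(K)$), so the lens-space family cannot simply be isolated from the matched data; nothing in your sketch explains how these corrections are eliminated, and ``pins down the oriented diffeomorphism type'' is doing all the work. Note also that the linking-form comparison only yields the existence of \emph{some} unit $a$ with $a^2\equiv -1\pmod{p}$, which is strictly weaker than $q^2\equiv -1\pmod{p}$ (e.g.\ $p=65$, $q=2$). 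In \cite{NiWu} the specific congruence is extracted by pinning down, via the $d$-invariant identities together with the positivity and monotonicity of the $V_i, H_i$, how the homeomorphism permutes the standard $\Z/p$-labelling of $\spinc$ structures --- in effect showing $a\equiv\pm q^{\pm1}\pmod{p}$ --- and that spin$^c$ bookkeeping, which you acknowledge only in passing as a ``naturality'' issue, is the heart of the proof rather than a technical afterthought. (Minor point: fiberedness detection does not enter this theorem; the degenerate case is handled by the surgery formula and rank/correction-term analysis, not by \cite{Ghiggini, NiFibered}-type results.)
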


Using $d$ invariants, one can show that various $3$-manifolds with $b_1=1$ are not surgery on a knot in $S^3$ \cite{AbsGraded}. By more refined methods (based on the knot surgery formulas), one can show that certain families of integer homology spheres are not surgeries on knots. For example:
\begin{theorem}[Hom-Karakurt-Lidman \cite{HKL}]
For $k \geq 4$, the Brieskorn spheres $\Sigma(2k, 4k-1, 4k+1)$ are not surgeries on knots in $S^3$.
\end{theorem}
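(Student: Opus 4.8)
The plan is to argue by contradiction, pitting the rational surgery formula of \cite{RatSurg} against the Heegaard Floer homology of the Brieskorn sphere, which is explicitly computable because $\Sigma(2k,4k-1,4k+1)$ is Seifert fibered. First I would suppose $Y := \Sigma(2k,4k-1,4k+1) \cong S^3_{p/q}(K)$ for some knot $K \subset S^3$, slope $p/q$, and some orientation. Since $Y$ is an integer homology sphere, $|H_1(S^3_{p/q}(K);\Z)| = |p| = 1$, so the slope is $\pm 1/q$ for a nonzero integer $q$; replacing $K$ by its mirror and $Y$ by $-Y$ if necessary --- harmless, since $-Y$ is again a Seifert fibered homology sphere with computable Floer homology --- one may assume $Y = S^3_{1/q}(K)$. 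The goal is then to extract strong constraints on $\HFp(Y)$ from the mapping-cone description of $1/q$-surgery and to contradict them using the explicitly computed $\HFp$ of the Brieskorn sphere.

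Next I would feed in the rational surgery formula \cite{RatSurg}: $\HFp(S^3_{1/q}(K))$ is the homology of a mapping cone assembled from the ``large-surgery'' complexes $A_s^+$ (subquotients of the knot Floer complex of $K$) and copies of $B^+ \cong \HFp(S^3)$, with vertical and horizontal structure maps $v_s^+$ and $h_s^+$; since these maps are isomorphisms once $|s|$ exceeds the Seifert genus $g(K)$, the cone is quasi-isomorphic to a finite complex whose homology can be read off grading by grading. Two kinds of output matter. First, the correction term is pinned down: $d(Y) = -2V_0(K)$, where $V_0(K) \ge 0$ is the integer invariant extracted from the knot Floer complex (cf.\ \cite{AbsGraded, RatSurg}), so $d(Y) \le 0$. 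Second, $\HFp_{\mathrm{red}}(Y)$ is built as a subquotient of $\bigoplus_{|s| \le g(K)} \HFp_{\mathrm{red}}(A_s^+)$, which forces structural restrictions on it: a conjugation symmetry, a bound on its total rank, and --- the point that will do the work --- a bound on how far below $d(Y)$ it can be supported, the bound being controlled by $q$, by $g(K)$, and, in the borderline case $V_0(K)=0$, by the higher invariants $V_1(K), V_2(K), \dots$.

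On the Brieskorn side, $Y$ (with either orientation) is the boundary of a star-shaped negative-definite plumbing, so $\HFp(Y)$ can be computed by the Ozsv\'ath-Szab\'o plumbing algorithm, equivalently by N\'emethi's lattice homology \cite{Plumbed, Nemethi}. The expected outcome is that $d(Y) = 0$ while $\HFp_{\mathrm{red}}(Y) \neq 0$, and --- this is the computational heart --- that for $k \ge 4$ the graded module $\HFp_{\mathrm{red}}(Y)$ spreads over a range of gradings (and has a rank) that grows with $k$ and violates the surgery-formula restrictions above for \emph{every} knot $K$ and \emph{every} $q$, whereas for $k \le 3$ this growth has not yet set in, consistent with the lack of an obstruction there. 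Since $d(Y) = 0$ also forces $V_0(K) = 0$, the two sides become incompatible, and $Y$ cannot be surgery on a knot.

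The hard part will be exactly this last comparison, in two respects. First, one must distill from the mapping cone a genuinely intrinsic obstruction obeyed by every $S^3_{\pm 1/q}(K)$: the correction term alone is useless since $d(Y) = 0$, so the cone has to be analyzed precisely in the degenerate case $V_0(K) = 0$, where it does not collapse in the naive way and the higher $V_i(K)$ enter. Second, one must run the N\'emethi algorithm for the entire family $\Sigma(2k,4k-1,4k+1)$ and control the grading support and rank of $\HFp_{\mathrm{red}}(Y)$ well enough to locate the precise threshold $k \ge 4$ at which it outgrows every $\pm 1/q$-surgery. Both are concrete but delicate; the remainder of the argument is formal.
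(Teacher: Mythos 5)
The survey you were given does not actually prove this theorem -- it only cites Hom--Karakurt--Lidman -- so the comparison has to be with their argument. At the level of outline your plan does match it: reduce to $\pm 1/q$-surgeries, extract a constraint from the Ozsv\'ath--Szab\'o mapping-cone formula that every such surgery must satisfy, compute $\HFp$ of the Brieskorn family by the plumbing/graded-root (N\'emethi) machinery, and exhibit a conflict for $k \geq 4$. But as written the proposal has a genuine gap: the one thing that constitutes the proof, namely the precise knot-independent obstruction, is never identified, and the form you guess for it cannot work. A ``bound controlled by $q$, by $g(K)$, and by the higher $V_i(K)$'' is not an obstruction at all, since those quantities are unbounded over all knots; and the mechanism you propose for the contradiction -- that the rank and grading spread of $\HFp_{\red}(Y)$ grow with $k$ and eventually ``outgrow every $\pm 1/q$-surgery'' -- fails in principle, because there is no universal bound on the rank or on how far below $d(Y)$ the reduced homology of a $\pm 1$-surgery can be supported (already $\pm 1$-surgeries on knots of large genus, e.g.\ connected sums of trefoils, realize arbitrarily large rank spread over arbitrarily wide grading ranges, even with $V_0=0$ examples among them). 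The actual content of Hom--Karakurt--Lidman is a specific structural property of $\HFp(Y)$ as an absolutely graded $\F[U]$-module, valid for every knot and every $n$, which they then show the graded roots of $\Sigma(2k,4k-1,4k+1)$, $k\ge 4$, violate; isolating that property is exactly the step your sketch defers, so nothing in the proposal yet rules out any knot.

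There is also a technical error that would derail the ``distillation'' step even if you pursued it: it is not true that $\HFp_{\red}(S^3_{1/q}(K))$ is a subquotient of $\bigoplus_{|s|\le g(K)}\HFp_{\red}(A_s^+)$. The mapping cone also produces reduced summands of the form $\F[U]/U^{m}$ coming from the mismatch of the tower maps $U^{V_s}$ and $U^{H_s}$, even when every $A_s^+$ has trivial reduced homology. For example, $S^3_{1/2}(T_{2,3}) = -\Sigma(2,3,11)$ has $\HFp_{\red}$ of rank $2$, although all large surgeries on the trefoil are lens spaces and hence all $H_*(A_s^+)$ are bare towers. Any constraint you derive while ignoring these tower-mismatch contributions will be false, and conversely these contributions are part of what a correct obstruction must account for. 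Finally, the claims on the Brieskorn side ($d(Y)=0$, nonvanishing of $\HFp_{\red}$, and the threshold at $k=4$ rather than $k\le 3$) are asserted as ``expected'' but not verified; they require actually running the lattice-homology/graded-root computation for the family, which is the other half of the proof.
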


\section{Floer stable homotopy}
\label{sec:swf}
Suppose we have an infinite dimensional space $X$ and a function $f: X \to \R$ so that we can define some variant of Floer homology $\HF(X, f)$. In \cite{CJS}, Cohen, Jones and Segal asked the following question: Can $\HF(X,f)$ be expressed as the homology of a ``Floer space'' $\S(X, f)$? They proposed a construction along the following lines: We choose an absolute grading on the Floer complex, lifting the relative grading. Then, to each generator of the Floer complex in degree $k$ we associate a $k$-cell; this is attached to the lower dimensional cells by maps determined by the spaces of gradient flow lines, according to the Pontrjagin-Thom construction. 

Let us illustrate this by an example: Suppose the Floer complex has only  two generators $x$ and $y$, with relative index $\mu(x, y) = k \geq 1$. The space of flow lines between $x$ and $y$ is an $k$-dimensional manifold, with an action of $\R$ by translation. Dividing by this action we obtain a $(k-1)$-dimensional manifold $P$. Under certain hypotheses, $P$ is closed, and can be equipped with a stable framing (a stable normal trivialization). If so, then the Pontrjagin-Thom construction produces an element in the stable homotopy group of spheres $\pi_{k-1}^{st}(S^0)$, represented by a map 
$$ \rho : S^{N+k-1} \to S^N$$
for $N \gg 0$. The desired Floer space $\S(X, f)$ is obtained from an $N$-cell and an $(N+k)$-cell, with the attaching map being $\rho$. 

There are several caveats about this construction:
\begin{enumerate}[(i)] \itemsep5pt
\item If we increase $N$, then the space changes by a suspension. Thus, it makes more sense to define $\S(X, f)$ as a stable homotopy type (suspension spectrum);

\item In many cases, the Floer complex has infinitely many generators, in infinitely many degrees.  Cohen, Jones and Segal propose that in such situations the natural object to define is a pro-spectrum (an inverse system of spectra);

\item The spaces $P$ of flow lines may not be compact, for two reasons: bubbling (which happens in instanton and in Lagrangian Floer theory, but not in monopole theory), and the presence of degenerations of flow lines into broken flow lines. If we assume no bubbling, then the spaces $P$ are expected to be manifolds-with-corners, which can be put together into attaching maps in a manner discussed in \cite{CJS};

\item Even if the non-compactness issues are resolved, we still need to specify stable framings for the Pontrjagin-Thom construction. Cohen, Jones and Segal identify a class in $KO^1(X)$ that obstructs the existence of such framings; 

\item Even if the obstruction is zero, to define the framings we need to endow the spaces of flow lines with smooth (not just topological) structures of manifolds-with-corners, such that these structures are compatible with each other. This leads into some difficult analytical issues.
\end{enumerate}

In the case of Seiberg-Witten Floer homology on $3$-manifolds $Y$ with $b_1(Y)=0$, a couple of the problems above disappear: There are only finitely many generators (so we expect a spectrum, rather than a pro-spectrum), there is no bubbling, and the framing obstruction vanishes. Still, defining the smooth manifold-with-corners structures seems difficult. 

A way of going around this problem was developed in \cite{Spectrum}. Rather than follow the Cohen-Jones-Segal program, one applies Furuta's technique of finite dimensional approximation \cite{Furuta}.  The configuration space $X$ of connections and spinors is a Hilbert space. We choose a certain sequence of finite dimensional subspaces $X_{\lambda}$ that are getting larger as $\lambda \to \infty$, so that their union is dense in $X$. We consider an approximate Seiberg-Witten flow on $X_{\lambda}$. Of course, on a closed finite dimensional manifold, instead of Morse homology we can simply take the singular homology and get the same answer. Our vector spaces $X_{\lambda}$ are non-compact, but a similar procedure works: We consider the Conley index \cite{ConleyBook} associated to the flow on a large ball $B \subset X_{\lambda}$. Roughly, the Conley index is the pointed space
$$ I_{\lambda} =  B / L$$
where $L \subset \del B$ is the part of the boundary of $B$ where the flow goes outwards. The homology of the Conley index is meant to be the Morse homology associated to the approximate flow (assuming that the flow is Morse-Smale). 

In \cite{Spectrum}, we do not need to assume the Morse-Smale transversality condition. Rather, we define Seiberg-Witten Floer homology directly as the relative homology of $I_{\lambda}$, with an appropriate degree shift depending on $\lambda$. This yields the benefit that we also get a Floer stable homotopy type, the suspension spectrum associated to $I_{\lambda}$. Since the Seiberg-Witten equations have an $S^1$ symmetry, we actually have an $S^1$-equivariant stable homotopy type
$$ \swf(Y, \s)$$
associated to a rational homology sphere $Y$ and a $\spinc$ structure $\s$ on $Y$.

Starting from here, if $h$ is a generalized homology theory (such as K- or KO-theory, complex bordism, stable homotopy, etc.), one can define a Seiberg-Witten Floer generalized homology:
$$ h_*(\swf(Y, s)).$$

This turns out to be particularly useful when combined with additional symmetry of the Seiberg-Witten equations, the conjugation symmetry. Let us focus on the case when $Y$ is a homology sphere, so that there is a unique $\spinc$ structure $\s$, coming from a spin structure. The conjugation and the $S^1$ symmetry together yield a symmetry by the group 
$$ \pin = S^1 \oplus S^1  j \subset \C \oplus \C j = \H,$$
where $\H$ are the quaternions and $j^2=-1$. We can then define $\swf(Y) =\swf(Y, \s)$ as a $\pin$-equivariant stable homotopy type \cite{beta}, and for example take its equivariant (Borel) homology
\begin{equation}
\label{eq:hfpin}
 \swfh^{\pin}_*(Y)= \tH^{\pin}_*(\swf(Y)).
 \end{equation}

This is the {\em Pin(2)-equivariant Seiberg-Witten Floer homology} of $Y$. In Section~\ref{sec:pin2} we will describe its application to the resolution of the triangulation question in high dimensions.

One can also define {\em Pin(2)-equivariant Seiberg-Witten Floer $K$-theory} by 
$$ \swfk^{\pin}(Y)= \tK^{\pin}(\swf(Y)).$$

This has applications to the topology of four-manifolds with boundary \cite{kg, FurutaLi}. They are inspired from Furuta's proof of the $10/8$ inequality for closed, smooth, spin four-manifolds: If $W$ is such a manifold, Furuta showed that
$$b_2(W) \geq \frac{10}{8}|\sigma(W)| + 2,$$
where $\sigma$ denotes the signature. (Matsumoto's $11/8$ conjecture \cite{Matsumoto} postulates the stronger inequality $b_2(W) \geq \tfrac{11}{8} |\sigma(W)|$.)

Now suppose that $W$ is a smooth, spin, compact $4$-manifold with boundary a homology sphere $Y$. From $\swfk^{\pin}(Y)$ one can extract an invariant $\kappa(Y) \in \Z$, and then prove an analog of Furuta's inequality:
$$b_2(W) \geq \frac{10}{8}|\sigma(W)| + 2 - 2\kappa(Y).$$

Slightly stronger inequalities can be obtained by considering $\pin$-equivariant KO-theory instead of K-theory; see \cite{LinKO}.

\section{The triangulation conjecture}
\label{sec:pin2}

A triangulation of a topological space is a homeomorphism to a simplicial complex.  In 1924, Kneser \cite{Kneser} asked the following:
\begin{question}
\label{q2}
Does every topological manifold admit a triangulation?
\end{question}

The answer was initially thought to be positive, and this was called the (simplicial) triangulation conjecture. A stronger version of this was the combinatorial triangulation conjecture, which posited that manifolds admit triangulations such that the links of the simplices are spheres. Such triangulations are called combinatorial, and are equivalent to PL (piecewise linear) structures on those manifolds.

Here is a short history of the relevant developments:

\begin{itemize} \itemsep5pt
\item Rad\'o \cite{Rado} proved that two-dimensional manifolds admit combinatorial triangulations;
\item Cairns \cite{Cairns} and Whitehead \cite{Whitehead} showed the same for smooth manifolds, of any dimension; 
\item Moise \cite{Moise} showed that three-manifolds have combinatorial triangulations;
\item Kirby and Siebenmann \cite{KSbook} showed that the combinatorial triangulation conjecture is false: There exist manifolds without PL structures in every dimension $\geq 5$. Further, they showed that in these dimensions, the existence of PL structures is governed by an obstruction class $\Delta(M) \in H^4(M; \Z/2)$;
\item Edwards \cite{Edwards} gave the first example of a non-combinatorial triangulation of a manifold: the double suspension of a certain homology $3$-sphere is homeomorphic to $S^5$, but the underlying triangulation is non-combinatorial; 
\item Freedman \cite{Freedman} found $4$-dimensional manifolds without PL structures, e.g., the $E_8$-manifold; 
\item Casson \cite{Casson} proved that, for example, Freedman's $E_8$-manifold does not admit any triangulations. This gave the first counterexamples to the simplicial triangulation conjecture (in dimension $4$);
\item The simplicial triangulation question in dimension $\geq 5$ was shown by Galewski-Stern \cite{GS} and Matumoto \cite{Matumoto} to be equivalent to a different problem in $3+1$ dimensions. This problem was solved in \cite{beta}, using $\pin$-equivariant Seiberg-Witten Floer homology. As a consequence, there exist non-triangulable manifolds in any dimension $\geq 5$.
\end{itemize}

Let us sketch the disproof of the triangulation conjecture in dimensions $\geq 5$. 

Suppose that a closed, oriented $n$-dimensional manifold $M$ ($n \geq 5$) is equipped with a triangulation $K$. Consider the Sullivan-Cohen-Sato class (cf. \cite{Sullivan, Cohen, Sato}):
\begin{equation}
\label{eq:cK}
 c(K) = \sum_{\sigma \in K^{(n-4)}} [\link_K(\sigma)] \cdot \sigma \in H_{n-4}(M; \Theta^H_3) \cong H^4(M; \Theta^H_3).
 \end{equation}
Here, the sum is taken over all codimension four simplices in the triangulation $K$. The link of each such simplex can be shown to be a homology $3$-sphere. (It would be an actual $3$-sphere if the triangulation were combinatorial.) Note the appearance of the homology cobordism group $\Theta^H_3$ defined in \eqref{eq:theta}. We focus on codimension four simplices in \eqref{eq:cK}, because the analog of the homology cobordism group in any other dimension is trivial \cite{Kervaire}.

The Rokhlin homomorphism $\mu$ from \eqref{eq:mu} induces a short exact sequence
\begin{equation}
\label{eq:ses}
 0 \To \ker(\mu) \To \Theta^H_3 \To \Z/2 \To 0
 \end{equation}
and an associated long exact sequence in cohomology
\begin{equation}
\dots \To H^4(M; \Theta^H_3) \xrightarrow{\hspace*{2pt} \mu_* \hspace*{2pt}} H^4(M; \Z/2)  \xrightarrow{\hspace*{2pt} \delta \hspace*{2pt}} H^5(M; \ker(\mu)) \To \dots.
\end{equation}

It can be shown that the image of $c(K)$ under $\mu_*$ is exactly the Kirby-Siebenmann obstruction to PL structures, $\Delta(M) \in H^4(M; \Z/2)$. Thus, if $M$ admits any triangulation, we get that $\Delta(M)$ is in the image of $\mu_*$, and hence in the kernel of the Bockstein homomorphism $\delta$. Thus, a necessary condition for the existence of simplicial triangulations is the vanishing of the class
$$ \delta(\Delta(M)) \in  H^5(M; \ker(\mu)).$$

Interestingly, this is also a sufficient condition:

\begin{theorem}[Galewski-Stern \cite{GS}; Matumoto \cite{Matumoto}]
A topological manifold $M$ of dimension $\geq 5$ is triangulable if and only if $\delta(\Delta(M)) =0$.
\end{theorem}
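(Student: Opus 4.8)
The ``only if'' direction refines the computation recalled before the statement, the one substantive input being that $\mu_*(c(K)) = \Delta(M)$ for every triangulation $K$. The mechanism is this: away from its codimension-four skeleton all links of $K$ are genuine spheres, so $K$ determines a PL structure on the complement of a codimension-four subcomplex $X \subset M$; since $\mathrm{TOP}/\mathrm{PL}$ is a $K(\Z/2,3)$ in the range that matters \cite{KSbook}, the obstruction to extending this PL structure over $X$ is a single class in $H^4(M;\Z/2)$, equal on one hand to $\Delta(M)$ by definition of the Kirby--Siebenmann invariant, and on the other hand --- by a stratum-by-stratum analysis whose local contribution at a codimension-four simplex $\sigma$ is $\mu(\link_K(\sigma))$, in the spirit of the local formulas of Sullivan, Cohen and Sato \cite{Sullivan, Cohen, Sato} --- to the mod-$\ker(\mu)$ reduction $\mu_*(c(K))$ of the class in \eqref{eq:cK}. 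Hence $\Delta(M) \in \operatorname{im}(\mu_*) = \ker(\delta)$, that is, $\delta(\Delta(M)) = 0$.

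For the ``if'' direction the first move is to rephrase the hypothesis: by exactness of the Bockstein long exact sequence, $\delta(\Delta(M)) = 0$ is equivalent to $\Delta(M) = \mu_*(\beta)$ for some $\beta \in H^4(M;\Theta^H_3)$, so it suffices to build a triangulation of $M$ out of such a $\beta$. I would take a handle decomposition of $M$ (available for $n \geq 6$ by \cite{KSbook}; the case $n = 5$ is handled by the same mechanism with extra care, as below). Away from the codimension-four skeleton $X$ the manifold is PL, and the failure of $M$ to be PL near $X$ is concentrated in topological normal-bundle data along $X$ whose obstruction class is $\Delta(M)$. A lift $\beta$ of $\Delta(M)$ along $\mu_*$ records a choice of homology $3$-spheres $\Sigma_i$ labelling the components of a codimension-four submanifold representing the Poincar\'e dual of $\beta$, with the $[\Sigma_i] \in \Theta^H_3$ assembling to $\beta$ and hence the $\mu(\Sigma_i)$ assembling to $\Delta(M)$; one then replaces the topological normal charts along $X$ by the corresponding cone-on-$\Sigma_i$ models. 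Because the double suspension theorem of Edwards and Cannon \cite{Edwards} shows that $\R^{n-4}$ times the open cone on a homology $3$-sphere is still homeomorphic to $\R^n$ for every $n \geq 5$ (the double suspension $\Sigma^2(\Sigma^3) \cong S^5$ being exactly what is needed, and in dimension five being used in its critical dimension), the resulting space is again a topological manifold; matching Kirby--Siebenmann classes, it is homeomorphic to $M$; and it carries an evident triangulation, since over each codimension-four stratum one has a cone on a triangulated homology $3$-sphere. Independence of all the choices up to homology cobordism is exactly why the coefficient group in $\beta$ is $\Theta^H_3$, and surjectivity of $\mu$ on $\Theta^H_3$ guarantees the needed $\Sigma_i$ exist.

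The hard part is to turn this picture into a rigorous, choice-free argument, and the cleanest packaging is the obstruction theory of Galewski--Stern \cite{GS}. One constructs a classifying space $B\mathrm{TRI}$ for germs of triangulations, a fibration $B\mathrm{TRI} \to B\mathrm{TOP}$ under $B\mathrm{PL}$, and identifies its comparison with $B\mathrm{TOP}/B\mathrm{PL} \simeq K(\Z/2,3)$, through the relevant skeleton, with the map $K(\mu,3) \colon K(\Theta^H_3,3) \to K(\Z/2,3)$ induced by the Rokhlin homomorphism --- and it is precisely here that the cone-on-homology-sphere construction, the double suspension theorem, and a bordism computation (identifying homology $3$-spheres modulo homology cobordism as the relevant invariant) enter. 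Then triangulating $M^n$ for $n \geq 5$ is the same as lifting the classifying map $\tau_M \colon M \to B\mathrm{TOP}$ along $B\mathrm{TRI} \to B\mathrm{TOP}$; the homotopy fiber is an Eilenberg--MacLane space, so there is a single obstruction, and it is $\delta(\Delta(M)) \in H^5(M;\ker(\mu))$ --- which proves both directions simultaneously. I expect the genuinely delicate points to be the topological transversality and cut-and-paste underlying the construction of $B\mathrm{TRI}$, and the dimension-five case, where the iterated suspensions available in higher dimensions collapse to a single application of the double suspension theorem. An alternative proof, organized as an induction over a handle decomposition, is due to Matumoto \cite{Matumoto}.
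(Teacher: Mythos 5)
This theorem is one the paper does not reprove: it is quoted from Galewski--Stern \cite{GS} and Matumoto \cite{Matumoto}, and the only argument actually given in the text is the necessity direction, namely that for any triangulation $K$ the class $c(K)$ of \eqref{eq:cK} satisfies $\mu_*(c(K))=\Delta(M)$, so $\Delta(M)\in\operatorname{im}(\mu_*)=\ker(\delta)$. Your first paragraph reproduces exactly this, with a plausible sketch of why $\mu_*(c(K))=\Delta(M)$ (PL structure off the codimension-four skeleton, local Rokhlin contributions), which is consistent with the paper's one-line assertion of that identity. Your treatment of sufficiency goes beyond anything in the paper and is, in outline, the actual Galewski--Stern argument: a classifying space for triangulations mapping to $B\mathrm{TOP}$, comparison of its fiber data with $K(\mu,3)\colon K(\Theta^H_3,3)\to K(\Z/2,3)$, and a single lifting obstruction $\delta(\Delta(M))\in H^5(M;\ker\mu)$; the Matumoto handle-theoretic alternative is correctly attributed. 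Your key geometric input is also stated correctly: from the double suspension theorem $\Sigma^2(\Sigma^3)\cong S^5$ one gets, by deleting a suspension point and using $c(A*B)\cong c(A)\times c(B)$, that $c(\Sigma^3)\times\R^{n-4}\cong\R^n$ for all $n\geq 5$, including the critical case $n=5$. The only caution is that your middle paragraph (replacing topological normal charts along a dual cycle by cone-on-$\Sigma_i$ models and ``matching Kirby--Siebenmann classes'') is heuristic and not by itself a proof --- as you acknowledge, the rigorous content lives in the classifying-space construction, topological transversality, and Edwards-type manifold recognition, all of which you defer to \cite{GS, Edwards, KSbook}. Relative to the paper, which likewise defers to these references, there is no gap; just be clear that your sketch is a précis of the cited proofs rather than an independent argument.
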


Thus, we need to find out if there exist manifolds $M$ with $\delta(\Delta(M))\neq 0$. Observe that the Bockstein map $\delta$ is zero if the short exact sequence \eqref{eq:ses} splits. Thus, if \eqref{eq:ses} split, then all high dimensional manifolds would be triangulable. The converse is also true:

\begin{theorem}[Galewski-Stern \cite{GS}; Matumoto \cite{Matumoto}]
\label{thm:gsm}
There exist non-triangulable manifolds of (every) dimension $\geq 5$ if and only if the exact sequence \eqref{eq:ses} does not split.
\end{theorem}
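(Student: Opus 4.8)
\noindent\emph{Proof proposal.} The plan is to reduce the statement, via the triangulability criterion of the preceding theorem, to a purely homotopy-theoretic question about the Rokhlin sequence \eqref{eq:ses}, and then to feed in the realizability of the Kirby--Siebenmann obstruction. First I would record that, by the preceding theorem, a manifold $M$ of dimension $\geq 5$ is triangulable precisely when $\delta(\Delta(M))=0$; hence the existence of a non-triangulable $n$-manifold, for a given $n\geq 5$, is equivalent to the existence of a closed topological $n$-manifold $M$ with $\delta(\Delta(M))\neq 0$.

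For the ``only if'' direction I would argue contrapositively: if \eqref{eq:ses} splits, a splitting $s\colon \Z/2\to\Theta^H_3$ induces, for every $M$, a splitting of $\mu_*\colon H^4(M;\Theta^H_3)\to H^4(M;\Z/2)$, so $\mu_*$ is surjective and the connecting map $\delta$ vanishes identically; thus $\delta(\Delta(M))=0$ for all $M$, and by the preceding theorem every manifold of dimension $\geq 5$ is triangulable. In particular there is then no non-triangulable manifold in any dimension $\geq 5$, so if non-triangulable manifolds exist in every (equivalently, in some) dimension $\geq 5$ then \eqref{eq:ses} does not split.

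For the ``if'' direction I would assume \eqref{eq:ses} does not split, so that its class in $\mathrm{Ext}^1_{\Z}(\Z/2,\ker\mu)\cong\ker\mu/2\ker\mu$ is nonzero and the Bockstein operation $\delta\colon H^4(-;\Z/2)\to H^5(-;\ker\mu)$ is not the zero operation --- for instance it is nonzero on the degree-$4$ generator of the Moore space $S^4\cup_2 e^5$. It then remains to realize this by genuine closed manifolds: for each $n\geq 5$ I would produce a closed topological $n$-manifold $M$ whose Kirby--Siebenmann class $\Delta(M)\in H^4(M;\Z/2)$ is not in the image of $\mu_*$, i.e. with $\delta(\Delta(M))\neq 0$. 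Here I would invoke the realizability of the Kirby--Siebenmann class: by Kirby--Siebenmann's product structure theorem and the identification $\mathrm{TOP}/\mathrm{PL}\simeq K(\Z/2,3)$ (so that $\Delta$ is classified by the composite of the topological tangent bundle with a map $B\mathrm{TOP}\to K(\Z/2,4)$), the class $\Delta(M)$ is realizable flexibly enough that, whenever $\delta$ is a nonzero operation, some closed manifold detects it. Concretely, following Galewski--Stern one thickens a finite complex modeled on $S^4\cup_2 e^5$ to a compact manifold, closes it up to a closed $5$-manifold $M$, and checks that its Kirby--Siebenmann class maps to a nonzero element under $\delta$; higher dimensions are reached by taking products with $S^1$, under which $\Delta$ and $\delta$ are natural and the relevant class survives by the K\"unneth formula. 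Any such $M$ is non-triangulable by the preceding theorem.

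The hard part will be this realizability step: manufacturing actual closed topological manifolds that detect the non-vanishing of $\delta$ on the (universal) Kirby--Siebenmann class in every dimension $\geq 5$ --- not merely the non-vanishing of $\delta$ as an abstract operation, and not merely the non-triviality of $\Delta(M)$ in $H^4$ but its survival under the Rokhlin Bockstein. This is exactly where the Kirby--Siebenmann machinery, together with its dimension restriction, is indispensable; note that in dimension $4$ non-triangulable manifolds such as Freedman's $E_8$-manifold already exist unconditionally by Casson's theorem, so the content of the statement is genuinely about dimensions $\geq 5$.
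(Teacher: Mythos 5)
This theorem is one the survey quotes from Galewski--Stern and Matumoto rather than proves; what the paper supplies is the remark preceding the statement (a splitting of \eqref{eq:ses} kills the Bockstein $\delta$, hence forces $\delta(\Delta(M))=0$ for all $M$) together with Kronheimer's Example, which furnishes the manifolds needed for the converse. Your ``only if'' direction coincides with that remark and is complete modulo the Galewski--Stern--Matumoto triangulability criterion stated just before, so that half is fine.

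The ``if'' direction, however, has a genuine gap exactly where you flag it, and flagging it does not fill it. From non-splitness you correctly get that $\delta$ is a nontrivial cohomology operation (nonzero on the Moore space $S^4\cup_2 e^5$), but the class you must feed into $\delta$ is not arbitrary: it has to be the Kirby--Siebenmann class of a closed topological $n$-manifold, and ``thicken the Moore space and close it up'' is not an argument --- doubling, coning, or surgering to close up can change $H^4$, $H^5$, and $\Delta$, and it is precisely the simultaneous requirements $\Delta(M)\neq 0$ and survival of its image in $H^5(M;\ker\mu)$ that must be engineered. This realization step is the actual content of the Galewski--Stern construction, so as written your proof assumes the hard part of the theorem. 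A concrete way to finish is already in the paper: let $W$ be the fake $\cp^2\#(-\cp^2)$ (nontrivial Kirby--Siebenmann invariant, intersection form $\langle 1\rangle\oplus\langle -1\rangle$); by Freedman's classification $W$ admits an orientation-reversing self-homeomorphism $f$, and the mapping torus $M$ of $f$ is a closed $5$-manifold with $\Sq^1\Delta(M)\neq 0$. When \eqref{eq:ses} does not split, $\Sq^1\Delta(M)\neq 0$ forces $\delta(\Delta(M))\neq 0$ (the Bockstein of the non-split extension is obtained from the integral Bockstein by pushing forward along a representative of the extension class, and non-splitness is what makes this pushforward nonzero here), so $M$ is non-triangulable; taking products with $T^{n-5}$, under which $\Delta$ and $\delta$ pull back injectively, handles every dimension $n\geq 5$. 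Replacing your realizability paragraph with this construction, or with the original Galewski--Stern one, would make the argument complete.
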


\begin{example} (due to Peter Kronheimer)
By Freedman's theorem \cite{Freedman}, simply connected, closed topological four-manifolds are characterized up to homeomorphism by their intersection form and their Kirby-Siebenmann invariant. Let $W$ be the fake $\cp^2 \# (-\cp^2)$, that is, the closed, simply connected topological $4$-manifold with intersection form $Q=\langle 1 \rangle \oplus \langle -1 \rangle$ and with non-trivial Kirby-Siebenmann invariant. Since the form $Q$  is isomorphic to $-Q$, by applying Freedman's theorem again we find that $W$ admits an orientation-reversing homeomorphism $f: W \to W$. Let $M$ be the mapping torus of $f$. Then $M$ is a five-manifold with the Steenrod square $\Sq^1 \Delta(M) \in H^5(M; \Z/2)$ non-trivial. Assuming that \eqref{eq:ses} does not split, it is not hard to see that the non-vanishing of $\Sq^1 \Delta(M)$ implies the non-vanishing of $\delta(\Delta(M))$. Therefore, $M$ is non-triangulable. By taking products with the torus $T^{n-5}$, we obtain non-triangulable manifolds in any dimension $n \geq 5$.
\end{example}

In view of Theorem~\ref{thm:gsm}, the disproof of the triangulation conjecture is completed by the following:

\begin{theorem}[Manolescu \cite{beta}]
\label{thm:beta}
The short exact sequence \eqref{eq:ses} does not split.
\end{theorem}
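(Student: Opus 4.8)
The plan is to obstruct a splitting of \eqref{eq:ses} by constructing an integer-valued lift of the Rokhlin invariant. Precisely, I want to produce a function
$$ \beta : \{\, \text{oriented homology } 3\text{-spheres} \,\} \To \Z $$
with three properties: (i) $\beta$ is invariant under homology cobordism, so it descends to a function on $\Theta^H_3$; (ii) $\beta(-Y) = -\beta(Y)$; and (iii) $\beta(Y) \equiv \mu(Y) \pmod 2$, with $\mu$ the Rokhlin homomorphism of \eqref{eq:mu}.

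Granting such a $\beta$, the theorem is immediate. A splitting of \eqref{eq:ses} is a homomorphism $\Z/2 \To \Theta^H_3$ that is right inverse to $\mu$; it singles out a homology sphere $Y$ with $\mu(Y) = 1$ and $2[Y] = 0$ in $\Theta^H_3$. Since the inverse in $\Theta^H_3$ is orientation reversal, $2[Y] = 0$ means $Y$ is homology cobordant to $-Y$. Then (i) and (ii) give $\beta(Y) = \beta(-Y) = -\beta(Y)$, hence $\beta(Y) = 0$; this contradicts $\beta(Y) \equiv 1 \pmod 2$, which follows from (iii) since $\mu(Y)=1$. So \eqref{eq:ses} does not split---and, via Theorem~\ref{thm:gsm}, non-triangulable manifolds exist in every dimension $\geq 5$.

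It remains to construct $\beta$, which is where $\pin$-equivariant Seiberg-Witten Floer homology enters. Starting from the $\pin$-equivariant stable homotopy type $\swf(Y)$ of Section~\ref{sec:swf}---built by finite-dimensional approximation of the Seiberg-Witten flow, and carrying the action of $\pin = S^1 \oplus S^1 j$---one forms the Borel homology $\swfh^{\pin}_*(Y) = \tH^{\pin}_*(\swf(Y))$ of \eqref{eq:hfpin}, a graded module over $\Ring = H^*_{\pin}(\pt)$. The first step is a structure theorem: after the grading shift built into finite-dimensional approximation, $\swfh^{\pin}_*(Y)$ agrees in all sufficiently large degrees with the corresponding homology of $S^0$, which contains distinguished infinite towers. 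The gradings at which these towers begin, together with the part of the $\Ring$-action coming from the $j$-symmetry (rather than just the $S^1$-symmetry that already produces the $d$-invariant of Section~\ref{sec:mon}), yield three numerical invariants $\alpha(Y) \geq \beta(Y) \geq \gamma(Y)$. Property (i) then follows by functoriality: a homology cobordism from $Y_0$ to $Y_1$ induces maps on $\swf$ that are isomorphisms in high degrees and carry towers to towers, forcing the three invariants of $Y_0$ and $Y_1$ to agree. Property (ii) follows from the Spanier-Whitehead duality $\swf(-Y) \simeq \swf(Y)^{\vee}$, valid up to suspension by a $\pin$-representation sphere, which interchanges the $\alpha$- and $\gamma$-towers and negates the grading of the $\beta$-tower.

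The main obstacle is property (iii), the Rokhlin congruence. Here one must compare the degree shift built into $\swf(Y)$---an index-theoretic correction term $n(Y, \s, g)$ assembled from the eta invariants of the Dirac and signature operators on $(Y,g)$, equivalently from the index of the Dirac operator on a spin $4$-manifold bounding $Y$---with the definition $\mu(Y) = \sigma(W)/8 \bmod 2$. The needed input is the four-dimensional spin index theorem: for a spin $4$-manifold $W$ with $\del W = Y$, the parity of the relevant Dirac index is governed by $\sigma(W)/8$. Feeding this into the grading formula for the bottom of the $\beta$-tower---measured relative to the position of the reducible critical point in $\swf(Y)$---gives $\beta(Y) \equiv \mu(Y) \pmod 2$. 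I expect this grading-and-index computation, together with verifying that the $\Ring$-module structure genuinely produces three towers exchanged by duality as stated, to be the technical heart of the argument; homology-cobordism invariance, by contrast, is a formal consequence of functoriality once the tower structure is established.
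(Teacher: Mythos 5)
Your proposal is correct and takes essentially the same route as the paper: an integer lift $\beta$ of the Rokhlin homomorphism extracted from the middle of the three infinite $v$-towers in $\swfh^{\pin}_*(Y)$ (the reducible's copy of $H_*(B\pin;\F)$ surviving in high degrees), with homology cobordism invariance, the antisymmetry $\beta(-Y)=-\beta(Y)$ coming from duality exchanging the $\alpha$- and $\gamma$-towers, and the mod $2$ congruence with $\mu$ coming from the eta-invariant/index-theoretic grading of the reducible. The formal derivation of non-splitting from such a $\beta$ is exactly the paper's argument.
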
 

\begin{proof}[Sketch of the proof] A splitting of \eqref{eq:ses} would consist of a map $\eta: \Z/2 \to \Theta^H_3$ with $\mu\circ \eta=\id$; that is, there would be a homology $3$-sphere $Y$ such that $\mu(Y)=1$ and $2[Y]=0\in \Theta^H_3$. 

To show that such a sphere does not exist, we construct a lift of $\mu$ to the integers,
$$ \beta : \Theta^H_3 \to \Z,$$
with the following properties:
\begin{enumerate}[(a)]
\item If $-Y$ denotes $Y$ with the orientation reversed, then $\beta(-Y) = - \beta(Y)$;
\item The mod $2$ reduction of $\beta(Y)$ is the Rokhlin invariant $\mu(Y)$.
\end{enumerate}

Given such a $\beta$, if we had a  homology sphere $Y$ of order two in $\Theta^H_3$, then $Y$ would be homology cobordant to $-Y$, and we would obtain
$$ \beta(Y) = \beta(-Y) = - \beta(Y),$$
hence $\beta(Y)=0$ and therefore $\mu(Y) = 0$. 

It remains to construct $\beta$. Its definition is modeled on that of the Fr{\o}yshov invariant $\delta$ from \eqref{eq:delta}, but instead of the ($S^1$-equivariant) monopole Floer homology $\HMto$, we use the $\pin$-equivariant Seiberg-Witten Floer homology $\swfh^{\pin}$ from \eqref{eq:hfpin}. 

Specifically, we consider $\swfh^{\pin}_*(Y)$ with coefficients in the field $\F = \Z/2.$ It is a module over the ring
$$ H^*_{\pin}(pt; \F)= H^*(B\pin; \F) = \F[q, v]/(q^3),$$
where $q$ is in degree $1$ and $v$ is in degree $4$. Then, we set
$$ b(Y)=\min \{r\equiv 2\mu(Y) +1 (\text{mod }{4}), \exists \ x \in \swfh^{\pin}_r(Y), 0\neq x \in \operatorname{im}(v^l), \forall l\}$$
and then normalize this to
$$ \beta(Y) =\tfrac{1}{2}(b(Y)-1).$$

Property (a) of $\beta$ and the fact that $\beta$ descends to a map on $\Theta^H_3$ are similar to what happens for the Fr{\o}yshov invariant, and can be proved in a similar manner.

More interesting is property (b) for $\beta$, which is satisfied because by construction we asked that $b(Y)\equiv 2\mu(Y) + 1 (\text{mod }{4})$. However, one needs to show that $\swfh^{\pin}(Y)$ contains nonzero elements $x$ in degrees congruent to $2\mu(Y) +1$ mod $4$, and such that they are in the image of $v^l$ for all $l$. 

To get an idea for why this is true, it is helpful to imagine that $\swfh^{\pin}(Y)$ is the homology of a complex generated by solutions to the Seiberg-Witten equations on $Y$ (although its actual definition from Section~\ref{sec:swf} is in terms of the singular homology of the Conley index). The Seiberg-Witten equations have some irreducible solutions (on which the group $\pin$ acts freely), and each such $\pin$ orbit contributes a copy of $\F$ to the chain complex. There is also a unique reducible solution, on which $\pin$ acts trivially, and which contributes a copy of $H_*^{\pin}(pt; \F)= H_*(B\pin; \F)$ to the complex. Further, the bottom degree element in $H_*(B\pin; \F)$ coming from the reducible is in a degree congruent to $2\mu(Y)$ mod $4$. (This is standard in Seiberg-Witten theory, and follows from a relation between eta invariants and the Rokhlin homomorphism.) The homology $H_*(B\pin; \F)$ (and the cap product action on it by the cohomology of $B\pin$) can be depicted as follows:
\begin{equation}
 \xymatrixcolsep{.7pc}
\xymatrix{
 \F  &  \F \ar@/_1pc/[l]_{q} &  \F \ar@/_1pc/[l]_{q} & 0 & \F \ar@/^1pc/[llll]^{v} & \F \ar@/_1pc/[l]_{q} \ar@/^1pc/[llll]^{v} & \F \ar@/_1pc/[l]_{q} \ar@/^1pc/[llll]^{v} & 0 & \dots  \ar@/^1pc/[llll]^{v} & \dots \ar@/^1pc/[llll]^{v} & \dots \ar@/^1pc/[llll]^{v}
} 
\end{equation}

Thus, there are three infinite $v$-tails, which live in degrees congruent to $2\mu(Y), 2\mu(Y)+1$ and $2\mu(Y)+2$ mod $4$. Since there are only finitely many irreducibles, their interaction with the tails in the chain complex is limited to some degree range. It follows that there must be some element in each of these tails that survives in homology. To define $\beta$ we focus on the middle tail. The other two tails produce maps $\alpha, \gamma : \Theta^H_3 \to \Z$ that do not quite satisfy the desired property (a) under orientation reversal; rather, we have
$$ \alpha(-Y)=-\gamma(Y).$$

On the other hand, $\beta$ satisfies both properties (a) and (b).

It is worth explaining why the same argument does not work in the case of the $S^1$-equivariant Seiberg-Witten Floer homology (which corresponds to $\HMto$ from Section~\ref{sec:mon}). That homology is a module over the ring $\Z[U]$ with $U$ in degree $2$, and the reducible contributes a copy of $H_*(\cp^{\infty})$ to the Floer complex. The bottom element is again in a degree congruent to $2\mu(Y)$ mod $4$. However, when we pass to homology, the new bottom element (which is used to define the Fr{\o}yshov invariant) may no longer have the same grading mod $4$. This boils down to the fact that $H_*(\cp^{\infty})$ is $2$-periodic, whereas $H_*(B\pin; \F)$ is $4$-periodic.

Let us illustrate this with an example: the Brieskorn sphere $Y=\Sigma(2,3,11)$, equipped with a suitable metric. There is one $\pin$-orbit of irreducible solutions to the Seiberg-Witten equations, in degree $1$. The reducible solution is in degree $0$, and indeed we have $\mu(\Sigma(2,3,11))=0$. There are flow lines from the irreducibles to the reducible, which contribute to the Floer differential. 
Precisely, the $\pin$-equivariant Seiberg-Witten Floer complex of $\Sigma(2,3,11)$ is
\begin{equation}
\label{eq:pin11c}
 \xymatrixcolsep{.7pc}
\xymatrixrowsep{0pc}
\xymatrix{
  \F  &  \F \ar@/_1pc/[l]_{q} &  \F \ar@/_1pc/[l]_{q} & 0 & \F \ar@/^1pc/[llll]^{v} & \F \ar@/_1pc/[l]_{q} \ar@/^1pc/[llll]^{v} & \F \ar@/_1pc/[l]_{q} \ar@/^1pc/[llll]^{v} & 0 & \dots  \ar@/^1pc/[llll]^{v} & \dots \ar@/^1pc/[llll]^{v} & \dots \ar@/^1pc/[llll]^{v}\\
   & \oplus  & & & & & & & & & \\
    & \F\ar@/^1pc/[uul]^{\del} & & & & & & & & & 
} \end{equation}
with the leftmost element in degree $0$. Its homology is
\begin{equation}
\label{eq:pin11h}
 \xymatrixcolsep{.7pc}
\xymatrixrowsep{0pc}
\xymatrix{
{\phantom{\F}} &  \F  &  \F \ar@/_1pc/[l]_{q} & 0 & \F  & \F \ar@/_1pc/[l]_{q} \ar@/^1pc/[llll]^{v} & \F \ar@/_1pc/[l]_{q} \ar@/^1pc/[llll]^{v} & 0 & \dots  \ar@/^1pc/[llll]^{v} & \dots \ar@/^1pc/[llll]^{v} & \dots \ar@/^1pc/[llll]^{v} 
} \end{equation}
with the leftmost element in degree $1$. We obtain $b(Y)=1$, so $\beta(Y)=0$, in agreement with $\mu(Y)=0$.

By contrast, the $S^1$-equivariant Seiberg-Witten Floer complex of $\Sigma(2,3,11)$ is
\begin{equation}
\label{eq:11c}
 \xymatrixcolsep{.7pc}
\xymatrixrowsep{0pc}
\xymatrix{
  \Z  &  0 & \Z \ar@/_1pc/[ll]_{U} & 0 & \Z \ar@/_1pc/[ll]_{U} & 0 & \dots \ar@/_1pc/[ll]_{U} \\
& \oplus  & & & & & &  \\
  \ \ \  & \Z\ar[uul]^{\del} & & & & & & \\
 & \oplus & & & & & &  \\
\ \ \  & \Z\ar@/^1pc/[uuuul]^{\del} & & & & & &  
} \end{equation}
with the leftmost element in degree $0$. Note that the $\pin$ orbit consists of two $S^1$ orbits, which 
produce the two copies of $\Z$ at the bottom. The $S^1$-equivariant Seiberg-Witten Floer homology is
\begin{equation}
\label{eq:11h}
 \xymatrixcolsep{.7pc}
\xymatrixrowsep{0pc}
\xymatrix{
{\phantom{\Z}} &  0 & \Z  & 0 & \Z \ar@/_1pc/[ll]_{U} & 0 & \dots \ar@/_1pc/[ll]_{U} \\
& \oplus  & & & & & &  \\
  \ \ \  & \Z & & & & & & \\
 } \end{equation}
with the bottom $0 \oplus \Z$ in degree $1$. From here we get $\delta(Y)=2/2=1$, which no longer gives $\mu(Y)$ modulo $2$.
\end{proof}

A different construction of $\pin$-equivariant Seiberg-Witten Floer homology was given by Lin in \cite{FLin}. Rather than doing finite dimensional approximation, Lin extends the Kronheimer-Mrowka definition of monopole Floer homology \cite{KMBook} to a Morse-Bott setting, which is suitable for preserving the $\pin$-equivariance of the equations. One can give an alternate disproof of the triangulation conjecture using Lin's construction; see \cite{FLin} for more details.

\bibliographystyle{amsalpha}
\bibliography{biblio}

\end{document}